\documentclass[11pt, onecolumn]{article}

\usepackage{constants}

\usepackage{stmaryrd}
\usepackage{tikz}
\usetikzlibrary{shapes.misc}
\tikzset{cross/.style={cross out, draw=black, minimum size=2*(#1-\pgflinewidth), inner sep=0pt, outer sep=0pt},
cross/.default={1pt}}

\usepackage{pstricks}
\usepackage[utf8]{inputenc}
\usepackage[T1]{fontenc}
\usepackage{lmodern}
\usepackage{xr}
\usepackage{bold-extra}
\usepackage{dsfont}



\usepackage{amsmath}
\usepackage{amssymb}
\usepackage{mathrsfs}
\usepackage{mathtools}

\usepackage{amsfonts}
\usepackage{amsthm}

\usepackage{enumerate}
\usepackage{multirow}
\usepackage{graphicx}
\parskip=5pt

\usepackage{hyperref}
\usepackage{xcolor}
\hypersetup{
    colorlinks,
    linkcolor={black},
    citecolor={black},
    urlcolor={black}
}


\usepackage[top=2.5cm,bottom=2.5cm,left=2cm,right=2cm,marginparwidth=2.5cm]{geometry}
\setlength{\headheight}{23pt}
\reversemarginpar

\makeatletter
\renewcommand{\paragraph}{%
  \@startsection{paragraph}{4}%
  {\z@}{1.5ex \@plus 1ex \@minus .2ex}{-1em}%
  {\normalfont\normalsize\bfseries}%
}
\makeatother

{\theoremstyle{plain}
\newtheorem{Proposition}{\textbf{Proposition}}[section]

\newtheorem{Lemma}[Proposition]{\textbf{Lemma}}

 }

{\theoremstyle{plain}
\newtheorem{Theorem}[Proposition]{Theorem}}

{\theoremstyle{definition}
\newtheorem{Definition}[Proposition]{Definition}}

{\theoremstyle{remark}

\newtheorem{Remark}[Proposition]{Remark}

}

\numberwithin{equation}{section}

\newcommand{\eps}{\varepsilon}


\newcommand{\bbP}{\mathbb{P}}

\newcommand{\bbX}{\mathbb{X}}
\newcommand{\bbY}{\mathbb{Y}}

\newcommand{\N}{\mathbb{N}}
\newcommand{\Z}{\mathbb{Z}}
\newcommand{\R}{\mathbb{R}}

\renewcommand{\S}{\mathbb S} 


\newcommand{\cB}{\mathcal{B}}

\newcommand{\cG}{\mathcal{G}}

\newcommand{\cX}{\mathcal{X}}





\newcommand{\rL}{\mathrm{L}}



\let\limsup\relax

\DeclareMathOperator* \limsup {\overline{lim}}

\DeclareMathOperator*{\argmin}{arg\,min}


\newcommand{\declim}[1]{\lim_{#1}\!\!\downarrow\!}

\let\originalleft\left
\let\originalright\right
\renewcommand{\left}{\mathopen{}\mathclose\bgroup\originalleft}
\renewcommand{\right}{\aftergroup\egroup\originalright}

\newcommand{\p}[1]{\left( #1 \right)}
\newcommand{\acc}[1]{\left\{ #1 \right\}}
\newcommand{\cro}[1]{\left[ #1 \right]}

\newcommand{\set}[2]{\acc{#1 \;\middle\vert\; #2 } }


\newcommand{\ind}[1]{\mathds{1}_{#1}}

\newcommand{\dpe}{\coloneqq}

\newcommand{\eol}{\nonumber\\}


\newcommand{\module}[1]{\left\lvert #1 \right\rvert}
\newcommand{\norme}[2][]{\left\| #2 \right\|_{#1}}


\newcommand{\ball}[2][]{\mathrm{B}_{#1}\p{#2}}
\newcommand{\clball}[2][]{\overline{\mathrm{B}}_{#1}\p{#2}}


\newcommand{\intervalle}[4]{#1#2\,,#3#4}

\newcommand{\intervalleff}[2]{\intervalle{\left[}{#1}{#2}{\right]}}
\newcommand{\intervalleof}[2]{\intervalle{\left]}{#1}{#2}{\right]}}
\newcommand{\intervallefo}[2]{\intervalle{\left[}{#1}{#2}{\right[}}
\newcommand{\intervalleoo}[2]{\intervalle{\left]}{#1}{#2}{\right[}}

\newcommand{\intint}[2]{\left\llbracket#1\,,#2\right\rrbracket}



\renewcommand{\d}{\mathrm{d}}

\newcommand{\base}[1]{\mathrm e_{#1}}





\newcommand{\Leb}{\operatorname{Leb}}

\newcommand{\Dirac}[1]{\delta_{#1}}

\newcommand{\E}[2][]{\mathbb{E}_{#1} \left[ #2\right]}
\newcommand{\Econd}[3][]{\E[#1]{ #2 \middle| #3  }}

\newcommand{\Pb}[2][]{\mathbb{P}_{#1}\left( #2\right)}




\newcommand{\pro}{\mathcal{N}}
\newcommand{\projpro}{\mathcal{N^*}}
\newcommand{\ProSpace}[1][\R^d \times \intervalleoo0\infty]{\mathbf{N}\p{#1}}


\newcommand{\AUXAnimal}{\mathcal{A}}
\newcommand{\AUXPath}{\mathcal{P}}

\newcommand{\AUXGeneric}{\mathcal{G}}
\newcommand{\AUXMassAnimal}{\mathrm{A}}
\newcommand{\AUXMassPath}{\mathrm{P}}

\newcommand{\AUXMassGeneric}{\mathrm{G}}
\newcommand{\AUXLimAnimal}{\mathbf{A}}
\newcommand{\AUXLimPath}{\mathbf{P}}

\newcommand{\AUXLimGeneric}{\mathbf{G}}
\newcommand{\AUXBiancre}[3]{\p{#1 \leftrightarrow #2, #3}}


\newcommand{\AUXFree}[1]{#1}

\newcommand{\Mass}[1]{\mathbf{m}\p{#1}}


\newcommand{\SetADF}[3]{\AUXFree{\AUXAnimal}\AUXBiancre{#1}{#2}{#3} }

\newcommand{\SetAUF}[1]{\AUXFree{\AUXAnimal}\p{#1} }

\newcommand{\SetPDF}[3]{\AUXFree{\AUXPath}\AUXBiancre{#1}{#2}{#3} }

\newcommand{\SetPUF}[1]{\AUXFree{\AUXPath}\p{#1} }

\newcommand{\SetGDF}[3]{\AUXFree{\AUXGeneric}\AUXBiancre{#1}{#2}{#3} }

\newcommand{\SetGUF}[1]{\AUXFree{\AUXGeneric}\p{#1} }

\newcommand{\SetADFalt}[3]{\AUXFree{\AUXAnimal}^{*}\AUXBiancre{#1}{#2}{#3} }
\newcommand{\SetAUFalt}[1]{\AUXFree{\AUXAnimal}^{*}\p{#1} }


\newcommand{\MassADF}[3]{\AUXFree{\AUXMassAnimal}\AUXBiancre{#1}{#2}{#3} }

\newcommand{\MassAUF}[1]{\AUXFree{\AUXMassAnimal}\p{#1} }

\newcommand{\MassPDF}[3]{\AUXFree{\AUXMassPath}\AUXBiancre{#1}{#2}{#3} }

\newcommand{\MassPUF}[1]{\AUXFree{\AUXMassPath}\p{#1} }

\newcommand{\MassGDF}[3]{\AUXFree{\AUXMassGeneric}\AUXBiancre{#1}{#2}{#3} }

\newcommand{\MassGUF}[1]{\AUXFree{\AUXMassGeneric}\p{#1} }

\newcommand{\MassADFpen}[4]{\AUXFree{\AUXMassAnimal}^{(#4)}\AUXBiancre{#1}{#2}{#3} }
\newcommand{\MassAUFpen}[2]{\AUXFree{\AUXMassAnimal}^{(#2)}\p{#1} }


\newcommand{\LimMassG}{\AUXLimGeneric }

\newcommand{\LimMassA}{\AUXLimAnimal }

\newcommand{\LimMassP}{\AUXLimPath}

\newcommand{\GeneralUB}{\mathbf{M}}

\newcommand{\Good}{\cG}
\newcommand{\Bad}{\cB}


\title{Law of large numbers for greedy animals and paths in a Poissonian environment}
\author{Julien \textsc{Verges}}
\date{\today}

\begin{document}

\maketitle
\abstract{We study two continuous and isotropic analogues of the model of greedy lattice animals introduced by Cox, Gandolfi, Griffin and Kesten~\cite{Cox93, Gan94} in 1993-94. In our framework, animals collect masses scattered on a Poisson point process on $\mathbb R^d$, and are allowed to have vertices outside the process or not, depending on the model. The author proved in~\cite{Ver24LLN_Greedy} a more general setting that for all $u$ in the Euclidean open unit ball, the maximal mass of animals with length $\ell$, containing $0$ and $\ell u$ satisfies a law of large numbers. We prove some additional properties in the Poissonian case, including an extension of the functional law of large numbers to the closed unit ball, and study strict monotonicity of the limit function along a radius. Moreover, we prove that a third, penalized model is a suitable interpolation between the former two.}

\section{Introduction}
\label{sec : intro}

\subsection{Context}

In 1993-94, Cox, Gandolfi, Griffin and Kesten~\cite{Cox93, Gan94} introduced the models of \emph{greedy lattice animals} and \emph{greedy lattice paths} as such: given an integer $d\ge2$ and a family of masses indexed by the vertices of the standard lattice $\Z^d$, the mass of a lattice animal (i.e.\ a finite connected subset of $\Z^d$) or path is defined as the sum of the masses of its vertices. They proved that under a moment condition, the maximal mass of lattice animals (resp.\ paths) containing (resp.\ starting from) the origin with cardinal $n$ satisfies a law of large numbers. This moment condition was improved in 2002 by Martin~\cite{Mar02}.

The notion of greedy \emph{continuous} paths has been introduced in 2008 by Gouéré and Marchand~\cite{Gou08} as a tool for the study of a continuous first-passage percolation model. Masses are scattered on a Poisson point process on $\R^d$ instead of $\Z^d$, and the lattice paths are replaced by polygonal lines. They proved that the maximal mass of paths with fixed length $\ell$ grows at most linearly in $\ell$. Gouéré and Théret~\cite{Gou17} also used this result, for a further study of the same model.

The author extended in~\cite{Ver24LLN_Greedy} the results of Cox, Gandolfi, Griffin, Kesten and Martin in two ways:
\begin{enumerate}
 	\item The family of masses may be any ergodic marked point process on $\R^d \times \intervalleoo0\infty$, provided integrability. Marked Poisson point processes belong to this setting.
 	\item The author proved a law of large numbers for the maximal mass of animals (resp.\ paths) of length $n$, containing (resp.\ with endpoints) $0$ and $n u$, with $u\in \ball{0,1}$, uniformly with respect to $u$ on a compact subset of $\ball{0,1}$.
\end{enumerate}
Three extensions of greedy lattice animals are proposed in this context. The first one defines continous animals as finite graphs with vertices in $\R^d$, the second one is a restriction to animals whose vertices are atoms of the point process, and the third one is an interpolation of the preceding two, where an animal may have vertices outside the point process, but each of them brings a penalization $-q$ in the definition of the mass. Section~\ref{subsec : intro/framework} gives the formal definitions. Theorems~\ref{thm : intro/known} and~\ref{thm : intro/known_pen} summarize the results proven in~\cite{Ver24LLN_Greedy} in the case of Poisson processes.

This article aims to establish extra properties in the Poissonian case, namely:
\begin{enumerate}
 	\item The limit functions have continuous extensionz to the closed ball unit ball, and vanish at the boundary. Moreover, they are strictly decreasing along a radius, on an explicit interval (see Theorem~\ref{thm : intro/main/limit_functions}).
 	\item The functional law of large numbers given in~\cite{Ver24LLN_Greedy} may be extended to the closed unit ball (see Theorem~\ref{thm : intro/main/cvg}).
 	\item The penalized model actually converges to the restricted one when the penalization parameter $q$ tends to $\infty$ (see~Theorem~\ref{thm : intro/main/q}).
\end{enumerate}

\subsection{Framework}
\label{subsec : intro/framework}

Let $d\ge 2$ be a integer and $\norme\cdot$ be the Euclidean norm on $\R^d$. For all $x\in \R^d$ and $r>0$, let $\ball{x,r}$ and $\clball{x,r}$ respectively denote the Euclidean open and closed balls of center $x$ and radius $r$. For all subsets $A,B\subseteq \R^d$, we define
\begin{equation}
 	\d(A,B) \dpe \inf \set{\norme{x-y} }{x\in A,\quad y\in B}.
\end{equation}
Let $\Leb$ denote the Lebesgue measure on $\R^d$. 

\paragraph{Point processes.} Let $\ProSpace$ denote the space of measures on $\R^d\times \intervalleoo0\infty$ which take integer values on compact subsets, endowed with the $\sigma$-algebra generated by the maps $\eta \mapsto \eta(A)$, for all Borel subsets $A\subseteq \R^d\times \intervalleoo0\infty$. We call \emph{marked point process} (on $\R^d\times \intervalleoo0\infty$) a random variable $\pro$ with values in $\ProSpace$.

Let $\nu$ be a nontrivial locally finite measure on $\intervalleoo0\infty$ that satisfies
\begin{equation}
	\label{eqn : intro/context/greedy_condition}
	\int_0^\infty \nu\p{\intervallefo t\infty}^{1/d}\d t< \infty
\end{equation}
and $\pro$ be a Poisson point process $\R^d \times \intervalleoo0\infty$ with intensity $\Leb \otimes \nu$, i.e.\ a point process on $\R^d\times \intervalleoo0\infty$ such that
\begin{enumerate}[(i)]
	\item For all disjoint Borel subsets $A_1,\dots, A_k \subseteq \R^d \times \intervalleoo0\infty$, $\pro(A_i)$, for $1\le i \le k$ are independent.
	\item For all Borel subsets $A\subseteq \R^d\times \intervalleoo0\infty$, $\pro(A)$ follows a Poisson distribution with parameter $\Leb\otimes \nu (A)$.
\end{enumerate}
Such a process always exists (see e.g Proposition~2.2.4  in \cite{Bac20}), and its distribution is determined by $\nu$. Moreover, it is simple (see e.g.\ Proposition~2.1.9 in \cite{Bac20}) and (see e.g.\ Lemma 1.6.8  in \cite{Bac20}) it may almost surely be written as the sum
\begin{equation}
	\label{eqn : intro/main/decomposition_pro}
	\pro = \sum_{n=1}^{\infty} \Dirac{(z_n, \Mass{z_n})},
\end{equation} 
where the $z_n$ and the $\Mass{z_n}$ for $n\ge 1$ are random variables with values in $\R^d$ and $\intervalleoo0\infty$ respectively.

\begin{Definition}
	\label{def : intro/framework/mass_path_animal}
	For every subset $A\subseteq \R^d$, the \emph{mass} of $A$ is defined as
	\begin{equation}
		\label{eqn : intro/framework/mass_path_animal}
		\Mass{A} \dpe \int_{A \times \intervalleoo0\infty} t \pro(\d x, \d t) = \sum_{n=1}^N \Mass{z_n}\ind{z_n \in A},
	\end{equation}
	with the notations of~\eqref{eqn : intro/main/decomposition_pro}.
\end{Definition}

\paragraph{Continuous paths and animals.}

\begin{Definition}
	\label{def : intro/framework/path_animal}
	Following Gouéré and Marchand (2008) \cite{Gou08}, we call (continuous) \emph{path} a finite sequence of points of $\R^d$. We define the \emph{length} of a path $\gamma = (x_0,\dots, x_r)$ as
	\begin{equation}
	\label{eqn : intro/framework/length_path}
		\norme\gamma \dpe \sum_{i=0}^{r-1}\norme{x_i - x_{i+1}}.
	\end{equation}
	A (continuous) \emph{animal} is a finite connected graph $\xi =(V,E)$, whose vertex set $V$ is included in $\R^d$. We define its \emph{length} as
	\begin{equation}
	\label{eqn : intro/framework/length_animal}
		\norme\xi \dpe \sum_{\acc{x,y}\in E} \norme{x-y}.
	\end{equation}
\end{Definition}
When there is no ambiguity, we will identify a path or an animal with its vertex set, e.g.\ for any animal $\xi =(V,E)$, $\Mass{\xi}= \Mass{V}$ and for any path $\gamma=(x_0,\dots, x_r)$, $\Mass{\gamma}=\Mass{\acc{x_0,\dots, x_r}}$. Similarly, $\d (0, \xi)$ denotes the distance between $0$ and the \emph{vertex set} of $\xi$. We are interested in the following families of animals and paths. For all $x,y\in \R^d$ and $\ell\ge0$, we define:
\begin{itemize}
	\item  $\SetPUF{\ell}$ as the set of paths of length at most $\ell$, starting at $0$.
	\item  $\SetPDF{x}{y}{\ell}$ as the set of paths of length at most $\ell$, starting at $x$ and ending at $y$.
\end{itemize}
Likewise, we define:
\begin{itemize}
	\item  $\SetAUF{\ell}$ as the set of animals of length at most $\ell$, containing $0$.
	\item  $\SetADF{x}{y}{\ell}$ as the set of animals of length at most $\ell$, containing $x$ and $y$.
\end{itemize}

\paragraph{The processes.}
For any set of paths or animals denoted by a calligraphic font letter, we use the same letter in roman typestyle to denote the supremum of the mass of animals or paths in this set. For example, for all $\ell\ge 0$,
\begin{equation}
	\label{eqn : intro/framework/def_MassPUF}
	\MassPUF{\ell} \dpe \sup_{\gamma \in \SetPUF{\ell} } \Mass{\gamma}.
\end{equation}
We also use this convention for a generic $\AUXGeneric \in \acc{\AUXPath, \AUXAnimal}$: for all $x,y\in \R^d$ and $\ell\ge 0$,
\begin{equation}
	\label{eqn : intro/framework/def_MassGUF_MassGDF}
	\MassGUF{\ell} \dpe \sup_{\gamma \in \SetGUF{\ell} } \Mass{\gamma}%
	\text{ and }
	\MassGDF{x}{y}{\ell} \dpe \sup_{\gamma \in \SetGDF{x}{y}{\ell}} \Mass{\gamma}.
\end{equation}
Another natural definition of the continous animals model consists in restricting the supremum to animals which vertex sets are included in the set 
\begin{equation}
	\projpro \dpe \set{x \in \R^d}{\Mass{x}>0}.
\end{equation}
More precisely, for all $x,y\in\R^d$ and $\ell\ge0$, we define:
\begin{itemize}
	\item $\SetAUFalt{\ell}$ as the set of animals $\xi$ such that $\norme\xi + \d(0,\xi) \le \ell$, or $\xi$ is empty,
	\item $\SetADFalt{x}{y}{\ell}$ as the set of animals $\xi$ such that $\norme\xi + \d(x,\xi) + \d(y,\xi)\le \ell$, or $\xi$ is empty,
\end{itemize}
and the corresponding variables 
\begin{equation}
	\label{eqn : intro/framework/animaux_contraints}
	\MassAUFpen\ell{\infty} \dpe \sup_{\substack{ \xi \in \SetAUFalt{\ell} \\ \xi \subseteq \projpro}  } \Mass\xi%
	\text{ and }
 	\MassADFpen xy\ell{\infty} \dpe \sup_{\substack{ \xi \in \SetADFalt xy{\ell} \\ \xi \subseteq \projpro}  } \Mass\xi.
\end{equation}
It is pointless to introduce similar processes for paths, since by triangle inequality, skipping vertices outside $\projpro$ along a path produces a path with the same mass and smaller length. We also introduce a third model which is an interpolation of the preceding two. For all $q\in\intervalleff0\infty$, $x,y\in\R^d$ and $\ell \ge 0$, we define
\begin{align}
	\label{eqn : intro/framework/animaux_pen1}
	\MassAUFpen{\ell}{q} &\dpe \sup_{\xi \in \SetAUFalt{\ell}} \cro{ \Mass{\xi} - q\#\p{\xi \cap \projpro^\mathrm{c}} }\\%
	\text{and }
	\label{eqn : intro/framework/animaux_pen2}
	\MassADFpen{x}{y}{\ell}{q} &\dpe \sup_{\xi \in \SetADFalt{x}{y}{\ell}} \cro{ \Mass{\xi} - q\#\p{\xi \cap \projpro^\mathrm{c}} }, 
\end{align}
where $\# A$ denotes the cardinal of a set $A$ and $\xi$ is identified to its vertex set in the expression $\xi \cap \projpro^\mathrm{c}$. In other words $\MassAUFpen{\ell}{q}$ and $\MassADFpen{x}{y}{\ell}{q}$ are analogues of $\MassAUF{\ell}$ and $\MassADF{x}{y}{\ell}$, with a penalization $-q$ for every vertex of $\xi$ not belonging to the point process. By adding the vertex $0$ and one edge, one shows that any animal in $\SetAUFalt{\ell}$ is included in an animal in $\SetAUF{\ell}$, thus $\MassAUFpen{\ell}{0} \le \MassAUF{\ell}$. The inclusion $\SetAUF{\ell} \subseteq \SetAUFalt{\ell}$ gives the converse inequality, hence
\begin{align*}
	\MassAUFpen{\ell}{0} &= \MassAUF{\ell}.
	\intertext{Likewise,}
	\MassADFpen{x}{y}{\ell}{0} &= \MassADF{x}{y}{\ell}.
\end{align*} 
Besides, \eqref{eqn : intro/framework/animaux_contraints} is compatible with \eqref{eqn : intro/framework/animaux_pen1} and \eqref{eqn : intro/framework/animaux_pen2}.

Note that for all $\ell>0$ and $q\in\intervalleff0\infty$,
\begin{equation}
	\label{eqn : intro/framework/easy_inequality}
	\MassPUF\ell \le \MassAUFpen\ell q \le \MassAUF\ell \le \MassPUF{2\ell},
\end{equation}
where we have used in the last inequality the fact that any animal may be covered by the path obtained by a depth-first search.

\paragraph{Law of large numbers.} Let $\cX$ denote the subset of $\clball{0,1}^2\times \intervalleof01$ consisting of triplets $(x,y,\ell)$ such that $\norme{x-y}< \ell$, and $x$ and $y$ are colinear. Corollary~1.8 in~\cite{Ver24LLN_Greedy} and the invariance of the model by rotations imply Theorems~\ref{thm : intro/known} and~\ref{thm : intro/known_pen}.

\begin{Theorem}
	\label{thm : intro/known}
	Assume that $\nu$ satisfies~\eqref{eqn : intro/context/greedy_condition}. Let $\AUXGeneric \in \acc{\AUXPath, \AUXAnimal}$. Then
	there exists a deterministic, nonincreasing, concave function $\LimMassG : \intervallefo01 \rightarrow \intervallefo0\infty$, such that for all compact subsets $K\subseteq \cX$,
	\begin{equation}
		\label{eqn : intro/known/cvg}
		\sup\set{ \module{ \frac{\MassGDF{Lx}{Ly}{L\ell} }{L} - \ell\LimMassG\p{\frac{ \norme{x-y} }{\ell}}  } }%
		{(x,y,\ell) \in K}%
		\xrightarrow[L\to \infty]{\text{a.s.\ and }\rL^1} 0.
	\end{equation}
	Moreover,
	\begin{equation}
		\label{eqn : intro/known/cvg_undirected}
		\frac{\MassGUF L}{L} \xrightarrow[L\to \infty]{\text{a.s.\ and }\rL^1} \LimMassG(0),
	\end{equation}
	and 
	\begin{equation}
		\label{eqn : intro/known/General_bound}
		\GeneralUB \dpe \sup_{\ell >0} \frac{\E{\MassAUF\ell} }{\ell} \le \Cr{Dirac_bound} \int_0^\infty \nu\p{\intervallefo t\infty}^{1/d}\d t,
	\end{equation}
	where $\Cl{Dirac_bound}>0$ is a constant that only depends on $d$.
\end{Theorem}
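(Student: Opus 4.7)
My plan is to deduce Theorem~\ref{thm : intro/known} from Corollary~1.8 of~\cite{Ver24LLN_Greedy} together with the rotational symmetry of the Poissonian model, as is already indicated just above the statement.

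First, the Poisson point process $\pro$ with intensity $\Leb\otimes\nu$ is stationary and ergodic under the translation action of $\R^d$, and the greedy condition~\eqref{eqn : intro/context/greedy_condition} is exactly the integrability hypothesis needed in Corollary~1.8. Applying that corollary to $\AUXGeneric\in\acc{\AUXPath,\AUXAnimal}$ yields, a priori, a deterministic nonincreasing concave function $\widetilde\LimMassG : \clball{0,1}\to\intervallefo0\infty$ defined on \emph{vectors}, together with the convergence of $\MassGDF{Lx}{Ly}{L\ell}/L$ to $\ell\,\widetilde\LimMassG((y-x)/\ell)$ uniformly on compact sets of endpoint/length data, the corresponding unrestricted statement~\eqref{eqn : intro/known/cvg_undirected}, and the upper bound~\eqref{eqn : intro/known/General_bound}. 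In the Poissonian case the constant appearing in~\eqref{eqn : intro/known/General_bound} depends only on $d$ because the intensity factors as $\Leb\otimes\nu$ with Lebesgue spatial marginal, so no further parameters of the model enter.

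The second step, specific to the Poissonian setting, is to show that $\widetilde\LimMassG$ is radial. Because $\Leb\otimes\nu$ is invariant under every rotation $R\in\rO(d)$, the processes $\pro$ and $R_\ast\pro$ have the same law, and hence $\MassGDF{Lx}{Ly}{L\ell}$ has the same distribution as $\MassGDF{LRx}{LRy}{L\ell}$. Passing to the almost-sure limit along one of the convergent subsequences supplied by Corollary~1.8 gives $\widetilde\LimMassG(Ru)=\widetilde\LimMassG(u)$ for all $R\in\rO(d)$ and all $u\in\clball{0,1}$, so one may define $\LimMassG(r)\dpe \widetilde\LimMassG(r\base 1)$ on $\intervallefo01$; this function inherits monotonicity and concavity from $\widetilde\LimMassG$ along a radius. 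Substituting $\widetilde\LimMassG((y-x)/\ell) = \LimMassG(\norme{x-y}/\ell)$ converts the convergence of the previous step into~\eqref{eqn : intro/known/cvg}, with the uniformity on compact $K\subseteq\cX$ inherited directly from the corollary.

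The main obstacle is organisational rather than conceptual: one must verify that any compact $K\subseteq\cX$ is contained in a compact set of the form already handled by Corollary~1.8, which is immediate since $\cX$ is a natural subset of the corresponding data space (the constraint $\norme{x-y}<\ell$ in the definition of $\cX$ ensures that $(y-x)/\ell$ stays in a compact subset of $\ball{0,1}$, where $\widetilde\LimMassG$ is well-behaved). Both the $\AUXGeneric=\AUXPath$ and $\AUXGeneric=\AUXAnimal$ cases are handled simultaneously by the same argument, and neither~\eqref{eqn : intro/known/cvg_undirected} nor~\eqref{eqn : intro/known/General_bound} requires any further use of rotational invariance, as they transfer verbatim from the corollary.
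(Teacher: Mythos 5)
Your proposal is correct and follows exactly the route the paper itself takes: the paper offers no detailed argument, simply noting that Theorem~\ref{thm : intro/known} follows from Corollary~1.8 of~\cite{Ver24LLN_Greedy} combined with the rotational invariance of the Poisson process with intensity $\Leb\otimes\nu$, which is precisely the two-step reduction you spell out.
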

\begin{Theorem}
	\label{thm : intro/known_pen}
	Assume that $\nu$ satisfies~\eqref{eqn : intro/context/greedy_condition}. Fix $q\in \intervalleff0\infty$. Then
	there exists a deterministic, nonincreasing, concave function $\LimMassA^{(q)} : \intervallefo01 \rightarrow \intervallefo0\infty$, such that for all compact subsets $K\subseteq \cX$,
	\begin{equation}
		\label{eqn : intro/known_pen/cvg}
		\sup\set{ \module{ \frac{\MassADFpen{Lx}{Ly}{L\ell}q }{L} - \ell\LimMassA^{(q)}\p{\frac{ \norme{x-y} }{\ell}}  } }%
		{(x,y,\ell) \in K}%
		\xrightarrow[L\to \infty]{\text{a.s.\ and }\rL^1} 0.
	\end{equation}
	Moreover,
	\begin{equation}
		\label{eqn : intro/known_pen/cvg_undirected}
		\frac{\MassAUFpen Lq}{L} \xrightarrow[L\to \infty]{\text{a.s.\ and }\rL^1} \LimMassA^{(q)}(0).
	\end{equation}
\end{Theorem}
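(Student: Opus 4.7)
The plan is to derive Theorem~\ref{thm : intro/known_pen} in the same way Theorem~\ref{thm : intro/known} was derived in~\cite{Ver24LLN_Greedy}, namely by verifying that the penalized mass functional fits the abstract framework of Corollary~1.8 of that paper, and then reducing to a one-parameter limit via isotropy of $\pro$.

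First, I would check that the functional $\xi \mapsto \Mass{\xi} - q\#(\xi\cap\projpro^{\mathrm c})$ is admissible in the abstract setup. The three required properties are translation covariance, concatenation subadditivity, and integrability. Translation covariance is immediate because $\pro$, and hence $\projpro$, are stationary under translations of $\R^d$. Subadditivity of $\Mass{\cdot}$ combined with subadditivity of $\#(\cdot \cap \projpro^{\mathrm c})$ yields the required concatenation inequality, provided one glues two animals at an atom of $\pro$, so that the $-q$ penalty is never double-counted at the join. Integrability is inherited from the unpenalized case through the trivial bound $\MassAUFpen{\ell}{q} \le \MassAUF{\ell}$ and the estimate~\eqref{eqn : intro/known/General_bound}.

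Once the framework applies, Corollary~1.8 produces a deterministic limit function $g^{(q)}$ defined on the geometric data $(x,y,\ell)$, satisfying the uniform LLN on compact subsets of $\cX$, with concavity and monotonicity inherited from the abstract setting. Isotropy of $\pro$, i.e.\ invariance of its distribution under the orthogonal group acting on $\R^d$, forces $g^{(q)}(x,y,\ell)$ to depend on $(x,y)$ only through $\norme{x-y}$, and by homogeneity one writes $g^{(q)}(x,y,\ell) = \ell\,\LimMassA^{(q)}(\norme{x-y}/\ell)$ for some nonincreasing concave function $\LimMassA^{(q)} : \intervallefo01 \to \intervallefo0\infty$. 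Convergence~\eqref{eqn : intro/known_pen/cvg_undirected} for the undirected process follows from the same invocation applied to $\MassAUFpen{L}{q}$.

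The main subtlety, as indicated above, is that the negative penalty $-q$ behaves badly under concatenation if vertices outside $\projpro$ happen to be identified between $\xi_1$ and $\xi_2$: one then effectively gains $+q$ in the penalized mass of the union rather than paying it. This is avoided by joining at atoms of $\pro$, which is the natural gluing point in the reconnection arguments of~\cite{Ver24LLN_Greedy}, and explains why the proof for $q=0$ extends verbatim to arbitrary $q \in \intervalleff0\infty$.
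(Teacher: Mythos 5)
Your proposal matches the paper's approach: Theorem~\ref{thm : intro/known_pen} is not proved in this paper but is imported directly, the stated justification being precisely ``Corollary~1.8 in~\cite{Ver24LLN_Greedy} and the invariance of the model by rotations,'' with the penalized functional already being one of the three models treated in that reference. Your additional verification of translation covariance, concatenation subadditivity (with the gluing-at-atoms caveat for the penalty), and integrability via $\MassAUFpen{\ell}{q}\le\MassAUF{\ell}$ is consistent with, and slightly more explicit than, what the paper records.
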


\subsection{Main results}

Our first result, Theorem~\ref{thm : intro/main/limit_functions}, yields extra information on the limit functions $\LimMassP$ and $\LimMassA^{(q)}$.

\begin{Theorem}
	\label{thm : intro/main/limit_functions}
	Assume that $\nu$ satisfies~\eqref{eqn : intro/context/greedy_condition}. Then
	\begin{enumerate}[(i)]
		\item Setting $\LimMassP(1) = 0$ and $\LimMassA^{(q)}(1) = 0$, for $q\in\intervalleff0\infty$, gives a continuous extension of these functions on $\intervalleff01$.
		\item The functions $\LimMassP$ and $\LimMassA^{(q)}$, for $q\in\intervalleff0\infty$ are strictly decreasing on $\intervalleff{\frac{1}{\sqrt d}}{1}$.
	\end{enumerate}
\end{Theorem}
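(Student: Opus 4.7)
The two statements are proven separately, with part (ii) building on part (i).

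For part (i), since $\LimMassP$ and $\LimMassA^{(q)}$ are each concave and non-increasing on $\intervallefo01$, their left-limits at $1$ exist in $\intervalleff0{\LimMassG(0)}$; the content is to show those limits are zero. The key geometric observation is that every $\xi\in\SetGDF{0}{Lse_1}{L}$ has its vertex set inside the ellipsoid
\[
E_L=\set{z\in\R^d}{\norme{z}+\norme{z-Lse_1}\le L},
\]
whose transverse semi-axis is $\tfrac{L}{2}\sqrt{1-s^2}$. My plan is to establish, uniformly in $L$, an estimate $\E{\MassGDF{0}{Lse_1}{L}}/L\le\varphi(s)$ with $\varphi(s)\to0$ as $s\to1^-$, and then conclude via~\eqref{eqn : intro/known/cvg} applied at $(x,y,\ell)=(0,se_1,1)$. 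To build $\varphi$, I would cover $E_L$ by $O(1/\sqrt{1-s^2})$ translates of a ball of radius $r\sim L\sqrt{1-s^2}$ and apply a Poisson rescaling inside each: dilation by $1/r$ turns a ball of radius $r$ under intensity $\Leb\otimes\nu$ into a unit ball under intensity $r^d\Leb\otimes\nu$, so the greedy integral appearing in~\eqref{eqn : intro/known/General_bound} becomes $r\int_0^\infty\nu\p{\intervallefo t\infty}^{1/d}\d t$. The main obstacle is that a naive covering-and-summing recovers only the trivial $\GeneralUB\cdot L$; the genuine decay in $1-s$ must come from exploiting that an animal of total length $L$ squeezed inside a tube of transverse radius $r\ll L$ is forced to revisit atoms or exit the tube, capturing strictly less mass per unit length than the unconstrained bound suggests.

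For part (ii), assume (i). Then $\LimMassG\in\acc{\LimMassP,\LimMassA^{(q)}}$ is concave, non-increasing on $\intervalleff01$, equal to $0$ at $1$, and strictly positive on $\intervallefo01$ because $\LimMassG(s)\ge(1-s)\LimMassG(0)>0$ by concavity (nontriviality of $\nu$ ensures $\LimMassG(0)>0$ via~\eqref{eqn : intro/known/cvg_undirected}). Standard convex-analytic arguments then produce $s_0\in\intervallefo01$ such that $\LimMassG\equiv\LimMassG(0)$ on $\intervalleff0{s_0}$ and $\LimMassG$ is strictly decreasing on $\intervalleff{s_0}1$. Proving strict decrease on $\intervalleff{1/\sqrt d}1$ therefore reduces to showing $s_0\le1/\sqrt d$, which in turn is equivalent to the strict inequality $\LimMassG(s)<\LimMassG(0)$ for some (hence, by concavity and monotonicity, all) $s>1/\sqrt d$.

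The threshold $1/\sqrt d$ is dictated by the geometry of the ellipsoid: at $s=1/\sqrt d$ the transverse semi-axis becomes $\tfrac L2\sqrt{(d-1)/d}$, which is the precise scale at which the constraint of passing through $Lse_1$ starts to strictly restrict the animal compared with the unconstrained problem. I would prove $\LimMassG(s)<\LimMassG(0)$ for $s>1/\sqrt d$ by refining the tube-confinement bound of part (i) at this scale and comparing it with the undirected LLN~\eqref{eqn : intro/known/cvg_undirected}. The hardest step is obtaining genuine strictness in the inequality, which I expect to require constructing an explicit "deficit" between a near-optimal unconstrained animal and one subject to the directed constraint—exploiting that $E_L$ is strictly contained in (and has volume strictly less than that of) the ball $\clball{0,L}$ whenever $s>1/\sqrt d$.
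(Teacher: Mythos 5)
Your reduction of (ii) to the single strict inequality $\LimMassG(\beta)<\LimMassG(0)$ for $\beta>1/\sqrt d$ (via concavity and monotonicity), and your reduction of (i) to showing the left-limit at $1$ vanishes, are exactly the right framework and match the paper. But the quantitative estimate on which both reductions rest is missing, and you have flagged the missing step yourself without supplying it. Your proposed route — confine the animal to the ellipsoid $E_L$ of transverse radius $\sim L\sqrt{1-s^2}$, cover it by balls, and sum the per-ball greedy bounds — gives, as you note, only $\GeneralUB\cdot L$ with no decay in $1-s$; the promised refinement (``an animal squeezed in a thin tube captures strictly less mass per unit length'') is precisely the hard content and is nowhere proved. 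The same missing deficit estimate is what would make (ii) work, and your volume comparison between $E_L$ and $\clball{0,L}$ is a heuristic that does not by itself produce a strict inequality, nor does it explain why the threshold is exactly $1/\sqrt d$. (A smaller issue: for animals, containing $0$ and $Lse_1$ with length $\le L$ does not confine the vertex set to $E_L$, only to the ellipsoid with $L$ replaced by roughly $2L$, since the two tree-paths from a vertex to $0$ and to $Lse_1$ may share edges.)

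The paper closes this gap by a completely different, Poisson-specific device (its ``stretching lemma''): apply the volume-preserving linear map $f(x)=(\lambda^{-(d-1)}x^1,\lambda x^2,\dots,\lambda x^d)$ to the point process. By the mapping theorem the transformed process has the same law, while any path in $\SetPDF{0}{\ell\beta\base1}{\ell}$ must spend at least a fraction $\beta^2$ of its squared length in the $\base1$ direction, so its image has length at most $g(\beta)\ell$ with
\[
g(\beta)=\sqrt d\,\beta^{1/d}\p{\tfrac{1-\beta^2}{d-1}}^{\frac{d-1}{2d}} .
\]
Optimizing $\lambda$ gives $\LimMassP(\beta)\le g(\beta)\LimMassP(0)$; since $g(\beta)\to0$ as $\beta\to1$ this yields (i), and since $g$ is strictly decreasing on $\intervalleff{1/\sqrt d}{1}$ with $g(1/\sqrt d)=1$ it yields (ii) — which is also where the threshold $1/\sqrt d$ actually comes from. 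Without this lemma or a working substitute for it, your argument does not go through.
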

We believe that $\LimMassP$ and $\LimMassA^{(q)}$ are actually strictly decreasing on $\intervalleff01$, since smaller values of $\beta$ essentially means lesser constraints on the candidate animals or paths. Our proof, however, fails on $\intervalleff0{\frac{1}{\sqrt d}}$ because it is based on a deterministic constraint on paths in $\SetPDF{0}{\ell \beta \base 1}{\ell}$ that only holds for $\beta \ge \frac{1}{\sqrt d}$ (see Lemma~\ref{lem : poisson/stretching} and Remark~\ref{rmk : poisson/stretching}).

Theorem~\ref{thm : intro/main/cvg} improves Theorems~\ref{thm : intro/known} and~\ref{thm : intro/known_pen} by extending the set of admissible triplets $(x,y,\ell)$ in order to include the boundary cases where $\ell = \norme{x-y}$. 
\begin{Theorem}
	\label{thm : intro/main/cvg}
	Assume that $\nu$ satisfies~\eqref{eqn : intro/context/greedy_condition}. Then
	\begin{enumerate}[(i)]
		\item 	\begin{equation}
					\label{eqn : intro/main/cvg}
					\sup\set{ \module{\frac{\MassGDF{Lx}{Ly}{L\ell} }{L} - \ell \LimMassG\p{\frac{ \norme{x-y} }{\ell}} } }%
					{\begin{array}{c}
						x,y\in \clball{0,1} \text{ colinear}\\ \norme{x-y} \le \ell \le 1	\end{array} }%
					\xrightarrow[L \to \infty]{\text{a.s.\ and }\rL^1} 0.
				\end{equation}
		\item For all $q\in \intervalleff0\infty$,
				\begin{equation}
					\label{eqn : intro/main/cvg_pen}
					\sup\set{ \module{\frac{\MassADFpen{Lx}{Ly}{L\ell}q }{L} - \ell \LimMassA^{(q)}\p{\frac{ \norme{x-y} }{\ell}} } }%
					{\begin{array}{c}
						x,y\in \clball{0,1} \text{ colinear}\\ \norme{x-y} \le \ell \le 1	\end{array} }%
					\xrightarrow[L \to \infty]{\text{a.s.\ and }\rL^1} 0.
				\end{equation}
	\end{enumerate}
\end{Theorem}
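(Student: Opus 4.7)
The plan is to prove part~(i); part~(ii) follows from the same argument, substituting Theorem~\ref{thm : intro/known_pen} for Theorem~\ref{thm : intro/known} and $\LimMassA^{(q)}$ for $\LimMassG$. Set $\phi(x,y,\ell) := \ell\LimMassG(\norme{x-y}/\ell)$ with $\phi(x,y,0) := 0$; Theorem~\ref{thm : intro/main/limit_functions}(i) together with the concavity of $\LimMassG$ makes $\phi$ continuous on the closed domain. Given $\eps > 0$, pick $\beta_\eps \in (0,1)$ with $\LimMassG(\beta_\eps) \le \eps$ (by continuity at $1$) and $\ell_\eps \in (0, 1/2]$ with $\ell_\eps \LimMassA(0) \le \eps$, and, writing $\beta := \norme{x-y}/\ell$, split the closed domain into $D_1 := \{\ell \ge \ell_\eps,\ \beta \le \beta_\eps\}$, $D_2 := \{\ell \ge \ell_\eps,\ \beta \ge \beta_\eps\}$ and $D_3 := \{\ell \le \ell_\eps\}$.

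Region $D_1$ is a compact subset of $\cX$, so Theorem~\ref{thm : intro/known} applies directly. In $D_2$, monotonicity in the length gives $\MassGDF{Lx}{Ly}{L\ell} \le \MassGDF{Lx}{Ly}{L\ell'}$ with $\ell' := \norme{x-y}/\beta_\eps \ge \ell$; since $\ell'$ may exceed $1$, I rescale by $c := \max(1, \ell') \in [1, 1/\beta_\eps]$, so that $(\tilde x, \tilde y, \tilde \ell') := (x/c, y/c, \ell'/c)$ lies in the compact set $\tilde K := \{(u,v,s) \in \clball{0,1}^2 \times [\ell_\eps, 1] : u, v, 0 \text{ colinear},\ \norme{u-v}/s = \beta_\eps\} \subset \cX$. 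The scaling identity $\MassGDF{Lx}{Ly}{L\ell'}/L = c\,\MassGDF{L'\tilde x}{L'\tilde y}{L'\tilde \ell'}/L'$ with $L' := Lc \ge L$, combined with uniform convergence on $\tilde K$ from Theorem~\ref{thm : intro/known} (which is uniform in $L' \ge L$ once $L$ is large), yields $\limsup_L \sup_{D_2} \MassGDF{Lx}{Ly}{L\ell}/L \le \LimMassG(\beta_\eps)/\beta_\eps \le \eps/\beta_\eps$, while $\phi \le \LimMassG(\beta_\eps) \le \eps$ on $D_2$.

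In $D_3$, any candidate in $\SetGDF{Lx}{Ly}{L\ell}$, viewed as an animal, contains $Lx$ and has length at most $L\ell_\eps$, so $\MassGDF{Lx}{Ly}{L\ell} \le \MassADF{Lx}{Lx}{L\ell_\eps}$; applying Theorem~\ref{thm : intro/known} on the compact $\{(x,x,\ell_\eps) : x \in \clball{0,1}\} \subset \cX$ gives $\limsup_L \sup_{D_3} \MassGDF{Lx}{Ly}{L\ell}/L \le \ell_\eps\LimMassA(0) \le \eps$, matching $\phi \le \ell_\eps\LimMassG(0) \le \eps$ on $D_3$. Combining the three estimates, the supremum in~\eqref{eqn : intro/main/cvg} has $\limsup_L \le C_\eps\eps$ a.s.\ with $C_\eps$ bounded as $\eps \to 0$, so sending $\eps \to 0$ concludes the a.s.\ convergence; the $L^1$ version then follows from uniform integrability, provided by \eqref{eqn : intro/framework/easy_inequality} and \eqref{eqn : intro/known/General_bound}. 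I expect the main obstacle to be the rescaling in $D_2$, where one must ensure that the rescaled triplets stay within a single compact subset of $\cX$ with $\tilde \ell'$ bounded away from $0$, and verify that Theorem~\ref{thm : intro/known}'s uniform convergence applies evenly across the varying scales $L' \in [L, L/\beta_\eps]$.
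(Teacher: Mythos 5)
Your proof is correct and follows essentially the same strategy as the paper's: apply the compact-uniform LLN of Theorem~\ref{thm : intro/known} away from the boundary $\norme{x-y}=\ell$, and near it use monotonicity of $\MassGDF{x}{y}{\cdot}$ in the length together with the vanishing of $\LimMassG$ at $1$ (the paper simply extends the LLN to $2\cX$ instead of rescaling back into the unit ball, and merges your regions $D_2$ and $D_3$ into a single case split on whether $\norme{x-y}<\sqrt\delta$). One minor imprecision: the $\rL^1$-boundedness coming from~\eqref{eqn : intro/framework/easy_inequality} and~\eqref{eqn : intro/known/General_bound} does not by itself give uniform integrability, but your region-by-region bounds already dominate the supremum by a deterministic constant plus error terms converging to $0$ in $\rL^1$ by Theorem~\ref{thm : intro/known}, which yields the $\rL^1$ convergence directly, exactly as in the paper.
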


Finally, Theorem~\ref{thm : intro/main/q} states that the penalized model constitutes a good interpolation between the two natural models of continuous greedy animals (namely with or without authorizing animals to have vertices outside the point process), in the sense that making the penalization parameter $q$ vary from $0$ to $\infty$ continously transforms the former into the latter.
\begin{Theorem}
	\label{thm : intro/main/q}
	Assume that $\nu$ satisfies~\eqref{eqn : intro/context/greedy_condition}. Then under the topology of the uniform convergence on $\intervalleff01$, $q\mapsto \LimMassA^{(q)}$ is continuous on $\intervalleff0\infty$.
\end{Theorem}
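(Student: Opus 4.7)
The plan is to combine convexity of $q \mapsto \LimMassA^{(q)}(\beta)$ with Dini's theorem on the compact $\intervalleff01$. For each realization of $\pro$ and each $(x,y,\ell) \in \cX$, the map $q \mapsto \MassADFpen{x}{y}{\ell}{q}$ is the supremum of a family of affine functions of $q$ and is nonincreasing; both properties pass to the $L \to \infty$ limit through Theorem~\ref{thm : intro/known_pen}, so $q \mapsto \LimMassA^{(q)}(\beta)$ is convex and nonincreasing on $\intervallefo0\infty$ for each $\beta \in \intervalleff01$, and in particular continuous on $\intervalleoo0\infty$.

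The remaining task is pointwise continuity at the endpoints $q = 0$ and $q = \infty$. At $q = 0$, I would use a truncation of $\nu$ to $\intervallefo{m}{\infty}$ for some $m>0$: any near-maximizer of the $m$-truncated problem visits at most $(\text{total mass})/m$ points of $\projpro$, a quantity of order $L$ by Theorem~\ref{thm : intro/known}, and after pruning $\projpro^{\mathrm c}$ vertices of degree at most two (which preserves the mass and does not increase the length, by the triangle inequality), it also contains $O(L)$ vertices in $\projpro^{\mathrm c}$. Letting $\LimMassA_m$ denote the limit function of the $m$-truncated problem, this yields an inequality of the form $\LimMassA^{(q)}(\beta) \ge \LimMassA_m(\beta) - Cq/m$ for some constant $C$; sending first $q \to 0^+$ and then $m \to 0$, and using monotone convergence to recover $\LimMassA_m \uparrow \LimMassA$ as $m \to 0$, one obtains $\lim_{q \to 0^+} \LimMassA^{(q)}(\beta) = \LimMassA(\beta)$.

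Continuity at $q = \infty$ is, I expect, the main obstacle. For any near-maximizer $\xi^*_q$ of $\MassADFpen{Lx}{Ly}{L\ell}{q}$, the non-negativity of this quantity together with the a.s.\ bound $\Mass{\xi^*_q} \le \MassAUF{CL}$ (valid for a constant $C$ depending on $(x,y,\ell)$) forces $\#(\xi^*_q \cap \projpro^{\mathrm c}) \le \MassAUF{CL}/q$, of order $L/q$. The strategy is then to convert $\xi^*_q$ into a genuine restricted animal: deleting the $\projpro^{\mathrm c}$ vertices leaves a forest with $O(L/q)$ connected components whose vertices lie in $\projpro$, and a density argument exploiting the Poisson structure and the greedy condition~\eqref{eqn : intro/context/greedy_condition} should allow the reconnection of these components through short edges between nearby points of $\projpro$, with total extra length $o_{q \to \infty}(L)$. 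The modified animal then lies in $\SetADFalt{Lx}{Ly}{L\ell(1+\eps)}$ with vertices in $\projpro$ and the same mass as $\xi^*_q$, giving $\LimMassA^{(q)}(\beta) \le (1+\eps)\LimMassA^{(\infty)}\p{\beta/(1+\eps)} + o_q(1)$; letting $\eps \to 0$ and invoking the continuity of $\LimMassA^{(\infty)}$ on $\intervalleff01$ from Theorem~\ref{thm : intro/main/limit_functions} concludes.

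Finally, the pointwise convergence is upgraded to uniform via Dini's theorem: each $\LimMassA^{(q)}$ is continuous on the compact $\intervalleff01$ by Theorem~\ref{thm : intro/main/limit_functions}, and the family $(\LimMassA^{(q)})_{q \in \intervalleff0\infty}$ is monotone in $q$, so every monotone pointwise-convergent sequence of continuous functions on $\intervalleff01$ converges uniformly; splitting an arbitrary sequence $q_n \to q_0$ into monotone subsequences then completes the proof.
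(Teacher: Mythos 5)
Your argument for continuity on $\intervallefo0\infty$ takes a genuinely different and valid route from the paper's. The observation that $q\mapsto \MassADFpen{x}{y}{\ell}{q}$ is a supremum of affine functions of $q$, hence convex, so that $q\mapsto\LimMassA^{(q)}(\beta)$ is convex, nonincreasing and therefore continuous on $\intervalleoo0\infty$, does not appear in the paper; the paper instead proves a global Lipschitz bound with constant $\LimMassA(0)\cro{\nu\p{\intervalleoo0\infty}\Dirac1}$ by showing that an optimal animal can be pruned to a tree in which every $\projpro^{\mathrm c}$-vertex has degree at least $3$, whence the number of penalized vertices is at most the number of $\projpro$-vertices (the case $\nu\p{\intervalleoo0\infty}=\infty$ being treated separately, since there the function is constant). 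Your truncation patch at $q=0$ is sound in outline and has the advantage of not needing that case distinction, but two details must be filled in: the count of $\projpro^{\mathrm c}$-vertices after pruning still requires the tree/degree argument just mentioned, and ``$\LimMassA_m\uparrow\LimMassA$'' is not abstract monotone convergence --- you need the quantitative estimate $\LimMassA(\beta)-\LimMassA_m(\beta)\le \Cr{Dirac_bound}\int_0^m\nu\p{\intervallefo t\infty}^{1/d}\d t$, which follows from the superposition principle and~\eqref{eqn : intro/known/General_bound}.

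The genuine gap is at $q=\infty$, exactly where you anticipate trouble. Deleting the $O(L/q)$ vertices of $\xi^*_q\cap\projpro^{\mathrm c}$ and reconnecting the components cannot yield extra length $o_q(L)$ by any argument driven only by the \emph{number} of deleted vertices: the reconnection cost is governed by the total length of the edges incident to those vertices, and $L/q$ vertices of degree $3$, each incident to edges of length of order $q$, carry total incident length of order $L$. Such vertices act as Steiner points; bypassing one costs a constant multiple of its star length, so this route only gives $\norme{\xi'}\le C\norme{\xi^*_q}$ for some constant $C>1$, hence $\LimMassA^{(q)}(\beta)\le C\,\LimMassA^{(\infty)}(\beta/C)$, which is not enough. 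Appealing to ``density of $\projpro$'' does not help either: for finite $\nu$ the set $\projpro$ is locally finite with gaps of order $1$, and nothing places its points near the deleted vertices. The paper's missing idea is a sprinkling argument: keep the graph structure of $\xi^*_q$ and replace each bad vertex by the nearest point of an independent Poisson process $\pro_\eps$ of intensity $\eps\Leb\otimes\nu$. The expected displacement per bad vertex is a finite constant $I(\eps)$, so by the degree bound the total length increase is $O\p{L\,I(\eps)/q}=o_q(L)$ with probability at least $1/2$, and the resulting animal is a restricted animal for the superposed process $\pro+\pro_\eps$; the superposition principle and the scaling lemma (Lemma~\ref{lem : poisson/scaling}) then give $\LimMassA^{(q)}(\beta)\le(1+\eps)^{1/d}(1+o_q(1))\LimMassA^{(\infty)}\p{\beta(1+o_q(1))^{-1}}$, and one lets $q\to\infty$, then $\eps\to0$. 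Without this, or a substitute controlling the length increase by the number of bad vertices rather than by incident edge length, your proof of $\limsup_{q\to\infty}\LimMassA^{(q)}\le\LimMassA^{(\infty)}$ does not go through. Your final Dini step coincides with the paper's.
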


\subsection{Outline of the paper}
In Section~\ref{sec : poisson/Preliminary} we prove Lemmas~\ref{lem : poisson/scaling} and~\ref{lem : poisson/stretching}, which respectively state that:
\begin{enumerate}
	\item In distribution, applying an homothety to $\pro$ or multiplying $\nu$ by a well-chosen constant is equivalent.
	\item For all $\beta\in \intervallefo{\frac{1}{\sqrt d}}1$, $\LimMassP(\beta)$ is upper bounded by a quantity which is strictly lesser than $\LimMassP(0)$ and vanishes as $\beta \to1$. 
\end{enumerate}
Our arguments are based on the so-called mapping theorem (see e.g.\ Theorem~5.1 in~\cite{LastPenrose}), which states that the image of a Poisson process by any measurable map is a Poisson process. These lemmas do not transpose well the case of general point processes.

Section~\ref{subsec : poisson/limit_functions} is devoted to the proof of Theorem~\ref{thm : intro/main/limit_functions}. We use the already known concavity and monotonicity of the limit functions, together with Lemma~\ref{lem : poisson/stretching}.

Section~\ref{subsec : poisson/cvg} contains the proof of Theorem~\ref{thm : intro/main/cvg}. Given Theorems~\ref{thm : intro/known_pen} and~\ref{thm : intro/main/limit_functions}, it is sufficient to upper bound the mass of an animal or path in $\MassGDF{Lx}{Ly}{L\ell}$, when $\frac{\norme{x-y}}{\ell} \simeq 1$.

In Section~\ref{subsec : poisson/q} we prove Theorem~\ref{thm : intro/main/q}. For the continuity at $q=\infty$, we use a sprinkling argument: given a animal $\xi(\ell)$ which realizes the maximum in the definition of $\MassADFpen{0}{\ell \beta \base 1}{\ell}{q}$, we replace each vertex of $\xi(\ell) \cap (\pro^*)^\mathrm{c}$ by an atom of a new Poisson point process, independent of the first one and having low intensity. Another Poisson-specific result called the superposition principle (see e.g.\ Theorem~3.3 in \cite{LastPenrose}) and Lemma~\ref{lem : poisson/scaling} provides a link between the limit functions in the original environment and in the enriched one. For the continuity on $\intervallefo0\infty$, we show that except in the straightforward case where $\nu\p{\intervalleoo0\infty} = \infty$, the number of points of $\pro^*$ belonging to an animal grows sublinearly with respect to its length, thus giving Lipschitz continuity of the limit function with respect to $q$.
\subsection{Notations}
In contexts where more than one point process is considered, we will indicate the dependence on the point process by square brackets, e.g.\ for any point process $\pro'$ on $\R^d \times \intervallefo0\infty$ and any subset $A\subseteq \R^d$,
\begin{equation}
	\label{eqn : intro/framework/mass_subset_other_process}
	\Mass{A}\cro{\pro'} \dpe \int_{A \times \intervallefo0\infty} t \pro'(\d x, \d t).
\end{equation}
For objects that only depend on the distribution $\nu$, we will indicate this dependence by $\cro{\nu}$. We denote by $\bbP_\nu$ the distribution of $\pro$.

We denote by $\p{\base i}_{1\le i \le d}$ the canonical basis of $\R^d$.


\section{Proof of the results}
\label{sec : poisson}
We fix a measure $\nu$ on $\intervalleoo0\infty$ such that the moment assumption~\eqref{eqn : intro/context/greedy_condition} is satisfied and a Poisson point process $\pro$ on $\R^d\times \intervalleoo0\infty$ with intensity $\Leb\otimes \nu$.

\subsection{Two preliminary lemmas}
\label{sec : poisson/Preliminary}
We recall the so-called mapping theorem (see e.g.\ Theorem~5.1 in \cite{LastPenrose}). 
\begin{Theorem}
	\label{thm : poisson/mapping}
	Let $\bbX$ and $\bbY$ be measurable spaces, $f : \bbX \rightarrow \bbY$ a measurable function and $\eta$ a Poisson point process on $\bbX$ with intensity $\mu$. Then the image $\eta \circ f^{-1}$ of $\eta$ by $f$ is a Poisson point process on $\bbY$, with intensity $\mu\circ f^{-1}$.
\end{Theorem}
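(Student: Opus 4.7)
The plan is to verify directly that the pushforward $\eta \circ f^{-1}$ satisfies the two defining properties of a Poisson point process on $\bbY$: Poisson-distributed counts on Borel sets, and independence of counts on disjoint Borel sets. Since Theorem~\ref{thm : poisson/mapping} is the classical mapping theorem, the proof is essentially a tautology once the two axioms of a Poisson point process are written out, but some care is needed for the point-process interpretation of the image measure.

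First I would check that $\eta \circ f^{-1}$ is a well-defined random counting measure on $\bbY$. Measurability of $f$ ensures that for every Borel $B \subseteq \bbY$ the set $f^{-1}(B)$ is Borel in $\bbX$, so $(\eta \circ f^{-1})(B) = \eta(f^{-1}(B))$ is a measurable $\N \cup \{\infty\}$-valued random variable. To legitimately view $\eta \circ f^{-1}$ as a point process in the sense used in the paper (finite on compacts), one must assume the pushforward intensity $\mu \circ f^{-1}$ is locally finite on $\bbY$; this is automatic in all applications the paper has in mind (affine maps between $\R^d$ and itself, or coordinate projections combined with a rescaling).

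Next, for any Borel $B \subseteq \bbY$, the random variable $(\eta \circ f^{-1})(B) = \eta(f^{-1}(B))$ is Poisson with parameter $\mu(f^{-1}(B)) = (\mu \circ f^{-1})(B)$, directly from the Poisson property of $\eta$. For independence, given pairwise disjoint Borel sets $B_1, \dots, B_k \subseteq \bbY$, the preimages $f^{-1}(B_1), \dots, f^{-1}(B_k)$ are pairwise disjoint Borel subsets of $\bbX$ (any common point would map to a common element of the $B_i$). Hence the variables $\eta(f^{-1}(B_i))$ are independent, and so are $(\eta \circ f^{-1})(B_i)$. Together these verify both axioms of the Poisson property for $\eta \circ f^{-1}$ with intensity $\mu \circ f^{-1}$.

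The only genuine obstacle is the one just mentioned, namely ensuring that the image measure actually qualifies as a point process rather than merely a random $\sigma$-finite measure. In the paper's setting the maps $f$ will be bijective homeomorphisms (or their restrictions), so $f^{-1}$ preserves compactness and local finiteness of $\mu \circ f^{-1}$ follows from that of $\mu$. For this reason I would not attempt a fully general statement but simply invoke the classical result exactly as the author does, referring to Theorem~5.1 of \cite{LastPenrose}.
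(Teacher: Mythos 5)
The paper does not prove this statement at all: it is quoted verbatim as the classical mapping theorem with a citation to Theorem~5.1 of \cite{LastPenrose}. Your direct verification of the two defining axioms --- $\eta(f^{-1}(B))$ is Poisson with parameter $(\mu\circ f^{-1})(B)$, and preimages of disjoint sets are disjoint, giving independence --- is correct and is the standard argument, and you rightly identify the only real subtlety, namely that $\mu\circ f^{-1}$ must be locally finite for the image to live in the space of counting measures the paper works with (which holds in all of the paper's applications, where $f$ is a linear bijection of $\R^d$ crossed with the identity on the marks).
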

It is the crucial argument in the proofs of Lemmas~\ref{lem : poisson/scaling} and~\ref{lem : poisson/stretching}.
\begin{Lemma}[The scaling lemma]
	\label{lem : poisson/scaling}
	Let $\lambda>0$ and $q\in\intervalleff0\infty$. The distribution of $\p{ \MassADFpen{\lambda x}{\lambda y}{\lambda \ell}q }_{ \substack{x,y\in \R^d\\ \ell >0} }$ under $\bbP_{\nu}$ is equal to the distribution of $\p{\MassADFpen{x}{y}{\ell}q}_{ \substack{x,y\in \R^d\\ \ell >0} }$ under $\bbP_{\lambda^d\nu}$.
\end{Lemma}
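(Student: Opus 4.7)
The plan is to deduce the lemma directly from Theorem~\ref{thm : poisson/mapping} applied to the dilation
\[
f \colon \R^d \times \intervalleoo0\infty \to \R^d \times \intervalleoo0\infty, \qquad (x,t) \mapsto \p{x/\lambda, t}.
\]
A one-line change of variables, using $f^{-1}(A\times B) = (\lambda A) \times B$ and $\Leb(\lambda A) = \lambda^d \Leb(A)$, gives $(\Leb \otimes \nu)\circ f^{-1} = \Leb \otimes (\lambda^d \nu)$, so $\pro' \dpe \pro\circ f^{-1}$ is a Poisson point process on $\R^d \times \intervalleoo0\infty$ with intensity $\Leb \otimes (\lambda^d \nu)$, i.e.\ $\pro'$ has distribution $\bbP_{\lambda^d \nu}$.

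Next I would verify that the deterministic map $\xi \mapsto \tfrac{1}{\lambda}\xi$, obtained by dividing every vertex of $\xi$ by $\lambda$ while keeping the same edge combinatorics, is a bijection from $\SetADFalt{\lambda x}{\lambda y}{\lambda \ell}$ onto $\SetADFalt{x}{y}{\ell}$ and preserves the penalized mass relative to the corresponding point processes. The bijectivity is immediate from Definition~\ref{def : intro/framework/path_animal}: both the length of $\xi$ and the Euclidean distances from $\xi$ to fixed points scale by $1/\lambda$. For the mass, writing $\pro = \sum_n \Dirac{(z_n, \Mass{z_n})}$ one has $\pro' = \sum_n \Dirac{(z_n/\lambda, \Mass{z_n})}$ almost surely, hence $(\pro')^{*} = \tfrac{1}{\lambda}\projpro$, and one reads off
\[
\Mass{\tfrac{1}{\lambda}\xi}\cro{\pro'} = \Mass{\xi}\cro{\pro}, \qquad \#\bigl(\tfrac{1}{\lambda}\xi \cap ((\pro')^{*})^{\mathrm c}\bigr) = \#\bigl(\xi \cap \projpro^{\mathrm c}\bigr),
\]
so that $\xi$ and $\tfrac{1}{\lambda}\xi$ produce identical penalized masses. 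Taking suprema over the bijectively corresponding families of animals yields, almost surely and simultaneously for all $(x,y,\ell)$,
\[
\MassADFpen{\lambda x}{\lambda y}{\lambda \ell}{q}\cro{\pro} = \MassADFpen{x}{y}{\ell}{q}\cro{\pro'}.
\]

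Combined with the first step, this identifies the two joint distributions in the statement, since the right-hand side expresses $\MassADFpen{x}{y}{\ell}{q}$ as the same measurable functional of $\pro' \sim \bbP_{\lambda^d \nu}$ as the left-hand side is of $\pro \sim \bbP_\nu$. There is no substantive obstacle; the only point requiring care is the bookkeeping in the second step, namely checking that $(\pro')^{*}$ is precisely the rescaled $\projpro$, which is what ensures the penalization counts transform correctly under the dilation.
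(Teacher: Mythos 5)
Your proposal is correct and follows essentially the same route as the paper: apply the mapping theorem to the dilation $(x,t)\mapsto(\lambda^{-1}x,t)$ to obtain a Poisson process with intensity $\Leb\otimes(\lambda^d\nu)$, then use the bijection $\xi\mapsto\lambda^{-1}\xi$ between $\SetADFalt{\lambda x}{\lambda y}{\lambda\ell}$ and $\SetADFalt{x}{y}{\ell}$ to identify the penalized masses. Your extra bookkeeping (computing the pushforward intensity explicitly and checking $(\pro')^{*}=\lambda^{-1}\projpro$) just spells out steps the paper leaves implicit.
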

\begin{proof}
	Recall the decompostion~\eqref{eqn : intro/main/decomposition_pro} of $\pro$. By Theorem~\ref{thm : poisson/mapping}, the point process 
	\begin{equation}
		\pro' \dpe \sum_{n=1}^\infty \Dirac{(\lambda^{-1}z_n, \Mass{z_n})}
	\end{equation}
	is a Poisson point process with intensity $\Leb\otimes \lambda^d \nu$. Let $x,y\in \R^d$ and $\ell>0$. For all animals $\xi$,
	\begin{align*}
		\xi \in \SetADFalt{\lambda x}{\lambda y}{\lambda \ell} &\iff  \lambda^{-1}\xi \in \SetADFalt{x}{y}{\ell} \\
		\text{and } \Mass{\xi}[\pro] -q\#\p{\xi \cap \projpro^\mathrm c}  &= \Mass{\lambda^{-1}\xi}[\pro'] -q\#\p{(\lambda^{-1}\xi) \cap {\pro'^*}^\mathrm c} ,
		\intertext{thus}
		\MassADFpen{\lambda x}{\lambda y}{\lambda \ell}{q}[\pro] &= \MassADFpen{x}{y}{\ell}{q}[\pro'].
	\end{align*}
\end{proof}
\begin{Lemma}[The stretching lemma]
	\label{lem : poisson/stretching}
	Let $\frac{1}{\sqrt d} \le \beta < 1$ and
	\begin{equation}
		\label{eqn : poisson/stretching_def_g}
		g(\beta) \dpe \sqrt d \beta^{1/d}\p{\frac{1-\beta^2}{d-1}}^{\frac{d-1}{2d}}.
	\end{equation}
	Then 
	\begin{align}
		\label{eqn : poisson/stretching_path}
		\LimMassP(\beta) &\le g(\beta) \LimMassP(0)%
		\intertext{and for all $q\in\intervalleff0\infty$, }%
		\label{eqn : poisson/stretching_animal}
		\LimMassA^{(q)}(\beta) &\le g(\beta) \LimMassA^{(q)}(0).
	\end{align}
\end{Lemma}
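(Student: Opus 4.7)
The plan is to exploit the mapping theorem (Theorem~\ref{thm : poisson/mapping}) through a volume-preserving anisotropic linear map of $\R^d$ and then optimize the resulting inequality over the transformation parameters. For $A, B > 0$ with $AB^{d-1} = 1$, let $T \colon \R^d \to \R^d$ be the linear map defined by $T(\base 1) = A\base 1$ and $T(\base i) = B\base i$ for $i \ge 2$. Since $\det T = 1$, Theorem~\ref{thm : poisson/mapping} applied on the spatial coordinate gives $T_*\pro \overset{d}{=} \pro$; moreover $T$ is a bijection preserving atom masses and the non-atom indicator, so $\Mass{V}[\pro] = \Mass{TV}[T_*\pro]$ and $\#(V \cap \projpro^{\mathrm c}) = \#(TV \cap ((T_*\pro)^*)^{\mathrm c})$ for any finite $V \subseteq \R^d$.

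Set $\phi(c) \dpe \sqrt{A^2 c^2 + B^2(1-c^2)}$ and restrict to $A \le B$, so that $\phi$ is concave on $[-1,1]$. The technical heart is the length bound $\norme{T\gamma} \le \ell\phi(\beta)$. For a path $\gamma = (x_0, \dots, x_r) \in \SetPDF{0}{\ell\beta\base 1}{\ell}$, writing each edge $u_i = x_{i+1}-x_i = (a_i, b_i) \in \R \times \R^{d-1}$ gives $\norme{Tu_i} = \norme{u_i}\,\phi(a_i/\norme{u_i})$. Jensen's inequality with weights $\norme{u_i}/L$ (where $L = \sum \norme{u_i} \le \ell$) and the constraint $\sum a_i = \ell\beta$ then yield
\begin{equation*}
    \sum_i \norme{Tu_i} \;\le\; L\,\phi(\ell\beta/L) \;=\; \sqrt{A^2(\ell\beta)^2 + B^2(L^2 - (\ell\beta)^2)},
\end{equation*}
which is nondecreasing in $L$ when $A \le B$, hence bounded above by $\ell\phi(\beta)$; thus $T\gamma \in \SetPDF{0}{A\ell\beta\base 1}{\ell\phi(\beta)}$. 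For an animal $\xi \in \SetADFalt{0}{\ell\beta\base 1}{\ell}$, I augment $\xi$ by adjoining the two straight segments joining $0$ and $\ell\beta\base 1$ to their respective closest vertices in $\xi$, producing an animal $\tilde\xi$ of edge length at most $\ell$ containing $\acc{0, \ell\beta\base 1}$. Decomposing the edges of $\tilde\xi$ into a path between $0$ and $\ell\beta\base 1$ and the complementary edges, combining the path bound above with the pointwise estimate $\norme{Te} \le B\norme{e}$ on the complementary edges, and checking monotonicity in the path length, yields $\norme{T\tilde\xi} \le \ell\phi(\beta)$; in turn this gives $\norme{T\xi} + d(0, T\xi) + d(A\ell\beta\base 1, T\xi) \le \norme{T\tilde\xi} \le \ell\phi(\beta)$, so $T\xi \in \SetADFalt{0}{A\ell\beta\base 1}{\ell\phi(\beta)}$. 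Since the augmentation is merely a device for the length bound and does not alter $T\xi$ itself, this works uniformly for every $q \in \intervalleff 0\infty$, including $q=\infty$.

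Combining these length estimates with mass and penalty preservation and with $T_*\pro \overset{d}{=} \pro$, taking expectations, dividing by $\ell$, letting $\ell \to \infty$, and invoking Theorems~\ref{thm : intro/known} and~\ref{thm : intro/known_pen} produces, for $\LimMassG \in \acc{\LimMassP, \LimMassA^{(q)}}$,
\begin{equation*}
    \LimMassG(\beta) \;\le\; \phi(\beta)\,\LimMassG\p{\tfrac{A\beta}{\phi(\beta)}} \;\le\; \phi(\beta)\,\LimMassG(0),
\end{equation*}
the second inequality by monotonicity of $\LimMassG$. Minimizing $\phi(\beta)$ over $A, B > 0$ with $AB^{d-1}=1$ via a first-order condition yields the optimizer $A_* = \p{\tfrac{1-\beta^2}{(d-1)\beta^2}}^{(d-1)/(2d)}$ and $\min\phi = g(\beta)$. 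The main obstacle is the standing assumption $A \le B$ (equivalently $A \le 1$), required for the concavity of $\phi$ and hence for Jensen's inequality to point in the correct direction: at the optimizer $A_*$ this reduces exactly to $\beta \ge 1/\sqrt d$, matching the hypothesis and Remark~\ref{rmk : poisson/stretching}; below that threshold $A_*$ exceeds $1$, $\phi$ becomes convex in $c$, and the method breaks down.
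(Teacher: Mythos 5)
Your proof is correct and follows essentially the same route as the paper: the same volume-preserving anisotropic linear map (your $(A,B)=(\lambda^{-(d-1)},\lambda)$ is exactly the paper's $f$), the same appeal to the mapping theorem, and the same optimization yielding $g(\beta)$ and the threshold $\beta \ge 1/\sqrt d$. Your edge-by-edge Jensen argument for the length bound and your explicit treatment of the animal case (which the paper dismisses as ``similar''), including the $\d(0,\xi)$ terms, are if anything more carefully justified than the paper's computation with sums of squared coordinate increments.
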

\begin{Remark}
	\label{rmk : poisson/stretching}
	Our main argument is essentially that for all $\beta > \frac{1}{\sqrt d}$ and $\gamma \in \SetPDF{0}{\ell \beta \base 1}{\ell}$, the contribution of the first coordinate in $\norme{\gamma}$, called $\ell_1^2$ in the proof (see~\eqref{eqn : poisson/continuity/stretching/norm_decomposition}) must be greater than $\frac\ell d$. Consequently, applying a linear transformation of $\R^d$ with determinant $1$ which is a contraction in the direction $\base 1$ and dilation in the other directions, decreases the length of such a path while conserving the density of the point process. This provides a stochastic domination between $\MassPDF{0}{\ell \beta \base 1}{\ell}$ and $\MassPUF{\ell g(\beta)}$.
\end{Remark}
\begin{proof}
	We only prove~\eqref{eqn : poisson/stretching_path}, as~\eqref{eqn : poisson/stretching_animal} is similar. Let $\frac{1}{\sqrt d} \le \beta < 1$ and $\lambda \dpe \p{\frac{(d-1)\beta^2}{1-\beta^2} }^{\frac1{2d} } \ge 1$. Consider the linear map
	\begin{align}
		f : \R^d &\longrightarrow \R^d \nonumber \\ (x^1,\dots,x^d) &\longmapsto \p{\lambda^{-(d-1)}x^1, \lambda x^2, \dots, \lambda x^d }. 	
	\end{align}
	By Theorem~\ref{thm : poisson/mapping}, the process
	\begin{equation}
		\pro'\dpe \sum_{n=1}^\infty\Dirac{\p{ f(z_n), \Mass{z_n} }},
	\end{equation}
	with $(z_n)$ defined by~\eqref{eqn : intro/main/decomposition_pro}, is a Poisson point process with intensity $\Leb \otimes \p{\module{\det f}^{-1}\nu}$. Since $f$ has determinant $1$, $\pro'$ has the same distribution as $\pro$.

	Let $\ell>0$ and $\gamma = (x_0, \dots, x_r)\in \SetPDF{0}{\ell \beta \base 1}{\ell}$ and define, for all $i\in\intint1d$,
	\begin{equation}
		\ell_i^2 \dpe \sum_{j=0}^{r-1}\module{x_j^i - x_{j+1}^i}^2,
	\end{equation}
	with the notation $x_j = (x_j^1, \dots, x_j^d)$. Note that
	\begin{equation}
		\label{eqn : poisson/continuity/stretching/norm_decomposition}
		\sum_{i=1}^d \ell_i^2 = \norme{\gamma}^2 \le \ell^2,
	\end{equation}
	therefore
	\begin{align}
		\norme{f(\gamma)}^2 %
			&= \lambda^{-2(d-1)}\ell_1^2 + \sum_{i=2}^d\lambda^2 \ell_i^2 \nonumber\\
			&\le \lambda^{-2(d-1)} \ell_1^2 + \lambda^2\p{\ell^2 - \ell_1^2}\nonumber\\
			&= \p{\lambda^{-2(d-1)} - \lambda^2}\ell_1^2 + \lambda^2 \ell^2.\nonumber
		\intertext{Since $\lambda \ge 1$ and $\ell_1^2 \ge \beta^2 \ell^2$, }
		\norme{f(\gamma)}^2 %
			&\le \ell^2\cro{\p{\lambda^{-2(d-1)} - \lambda^2}\beta^2 + \lambda^2 }\nonumber\\
			&= \ell^2\cro{\lambda^{-2(d-1)}\beta^2 + \lambda^2(1-\beta^2) }.\nonumber
	\end{align}
	Applying the definition of $\lambda$ yields
	\begin{align}
		\norme{f(\gamma)}^2 %
			&\le \ell^2\cro{ \p{\frac{1-\beta^2}{(d-1)\beta^2}}^{\frac{d-1}{d}}\beta^2 + \p{\frac{1-\beta^2}{(d-1)\beta^2} }^{-1/d}(1-\beta^2)  }\nonumber\\
			&= \ell^2 \p{\frac{1-\beta^2}{d-1}}^{\frac{d-1}{d}} \cdot \cro{\beta^{2/d} + \frac{(d-1)\beta^{2/d}(1-\beta^2) }{1-\beta^2}  }\nonumber\\
			&= \ell^2 d\beta^{2/d}\p{\frac{1-\beta^2}{d-1}}^{\frac{d-1}{d}} = \p{g(\beta) \ell }^2 .\nonumber
	\end{align}
	Consequently,
	\begin{align}
		\label{eqn : poisson/continuity/stretching/UB_length}
		\norme{f(\gamma)} %
			&\le \ell g(\beta).
	\intertext{In particular,}
		\Mass{f(\gamma)}\cro{\pro'} &\le \MassPUF{\ell g(\beta)}\cro{\pro'}.\nonumber
		\intertext{Besides, since $\Mass{f(\gamma)}\cro{\pro'} = \Mass{\gamma}\cro{\pro}$, }
		\Mass{\gamma}\cro{\pro} &\le \MassPUF{\ell g(\beta)}\cro{\pro'}.\nonumber
		\intertext{Taking the supremum in $\gamma$, we obtain}
		\MassPDF{0}{\ell \beta \base 1}{\ell}\cro{\pro} &\le \MassPUF{\ell g(\beta)}\cro{\pro'}.\nonumber
		\intertext{Since $\pro$ and $\pro'$ have the same distribution, dividing by $\ell$ and letting $\ell\to \infty$ gives}
		\LimMassP(\beta ) &\le g(\beta) \LimMassP(0). 
	\end{align}
\end{proof}
\subsection{Proof of Theorem~\ref{thm : intro/main/limit_functions}}
\label{subsec : poisson/limit_functions}
We now prove Theorem~\ref{thm : intro/main/limit_functions}, using Theorems~\ref{thm : intro/known},~\ref{thm : intro/known_pen} and Lemma~\ref{lem : poisson/stretching}.
\begin{proof}[Proof of (i)]
Continuity on $\intervallefo01$ is a consequence of Theorems~\ref{thm : intro/known} and~\ref{thm : intro/known_pen}. Since for all $\beta \in \intervalleff01$ and $q\in\intervalleff0\infty$, %
\[ \LimMassP(\beta) \le \LimMassA^{(q)}(\beta) \le \LimMassA(\beta),\]%
it is sufficient to show that
\begin{equation}
	\label{eqn : poisson/continuity/win_condition}
	\lim_{\beta \to 1} \LimMassA(\beta ) = 0,
\end{equation}
which is a direct consequence of Lemma~\ref{lem : poisson/stretching}.
\end{proof}
\begin{proof}[Proof of (ii)]
	We only prove the result for $\LimMassP$, the other case being analogous. Since $\beta \mapsto \LimMassP(\beta )$ is concave and nonincreasing on $\intervalleff01$, it is sufficient to prove that for all $\frac{1}{\sqrt d}< \beta \le 1$,
\begin{equation}
	\LimMassP(\beta ) < \LimMassP(0).
\end{equation}
Note that the function $g$ defined by~\eqref{eqn : poisson/stretching_def_g} is strictly decreasing on $\intervalleff{\frac{1}{\sqrt d}}{1}$, and $g\p{\frac1{\sqrt d} }=1$, thus Lemma~\ref{lem : poisson/stretching} concludes.
\end{proof}


\subsection{Proof of Theorem~\ref{thm : intro/main/cvg}}
\label{subsec : poisson/cvg}

We only prove the result for $\LimMassP$, the other cases being analogous. Let $0<\delta \le 1$ and
\begin{equation}
	K_\delta \dpe \set{(x,y,\ell) \in \clball{0,2}^2 \times \intervalleof02 }{(x,y)\text{ are colinear and }\norme{x-y} + \delta \le \ell \le 2} \subseteq 2\cX.
\end{equation} A straightforward adaptation of Theorem~\ref{thm : intro/known}, with $\cX$ replaced by $2\cX$, yields
\begin{equation}
	\label{eqn : poisson/cvg/error_term}
	X(L) \dpe \sup\set{ \module{\frac{\MassPDF{Lx}{Ly}{L\ell} }{L} - \ell \LimMassP\p{\frac{\norme{x-y}}{\ell}} } }%
	{ (x,y,\ell) \in K_\delta }%
	\xrightarrow[L \to \infty]{\text{a.s.\ and }\rL^1} 0.
\end{equation}
Let $L>0$. Consider two colinear points $x,y\in \clball{0,1}$ and $\ell>0$. Assume that $\norme{x-y}\le \ell \le 1$. We claim that
\begin{equation}
	\label{eqn : poisson/cvg/UB_general}
	\module{\frac{\MassPDF{Lx}{Ly}{L\ell} }{L} - \ell \LimMassP\p{\frac{\norme{x-y} }{\ell}} }  %
		\le 2 \cro{ \p{ \sqrt\delta   \LimMassP\p{0} }\vee \LimMassP\p{\frac{ 1}{ 1+ \sqrt\delta }} } + X(L).
\end{equation}

\emph{Case 1: }Assume that $(x,y,\ell) \in K_\delta$. Then $\eqref{eqn : poisson/cvg/UB_general}$ follows from the definition of $X(L)$.

\emph{Case 2: }Assume that $(x,y,\ell) \notin K_\delta$. Then by definition of $X(L)$, 
\begin{align}
	\frac{\MassPDF{Lx}{Ly}{L\ell} }{L} %
		&\le \frac{\MassPDF{Lx}{Ly}{L\p{\norme{x-y} + \delta}  } }{L}\eol
		&\le \p{\norme{x-y} + \delta}   \LimMassP\p{\frac{ \norme{x-y} }{ \norme{x-y} + \delta }} + X(L).\nonumber
\end{align}
Besides, since $\beta \mapsto \LimMassP(\beta)$ is nonincreasing on $\intervalleff01$,
\begin{equation*}
	\ell \LimMassP\p{ \frac{ \norme{x-y} }{\ell} } \le  \p{\norme{x-y} + \delta}   \LimMassP\p{\frac{ \norme{x-y} }{ \norme{x-y} + \delta }},
\end{equation*}
thus
\begin{equation}
	\label{eqn : poisson/cvg/UB_module_case2}
	\module{\frac{\MassPDF{Lx}{Ly}{L\ell} }{L} - \ell \LimMassP\p{\frac{ \norme{x-y} }{\ell}} }%
		\le \p{\norme{x-y} + \delta}   \LimMassP\p{\frac{ \norme{x-y} }{ \norme{x-y} + \delta }} + X(L).
\end{equation}
To bound the right-hand side of~\eqref{eqn : poisson/cvg/UB_module_case2}, we further discriminate two subcases, according to the value of $\norme{x-y}$. If$\norme{x-y}< \sqrt\delta$, then~\eqref{eqn : poisson/cvg/UB_module_case2} gives
\begin{equation}
	\module{\frac{\MassPDF{Lx}{Ly}{L\ell} }{L} - \ell \LimMassP\p{\frac{ \norme{x-y} }{\ell}} }%
		\le 2\sqrt\delta   \LimMassP\p{0} + X(L).
\end{equation}
Otherwise,~\eqref{eqn : poisson/cvg/UB_module_case2} and another use of the monotonicity of $\LimMassP$ give
\begin{equation}
	\module{\frac{\MassPDF{Lx}{Ly}{L\ell} }{L} - \ell \LimMassP\p{\frac{ \norme{x-y} }{\ell}} }%
		\le   2\LimMassP\p{\frac{ 1}{ 1+ \sqrt\delta }} + X(L).
\end{equation}
Consequently,~\eqref{eqn : poisson/cvg/UB_general} holds. 

Letting $L \to \infty$ in~\eqref{eqn : poisson/cvg/UB_general} and applying~\eqref{eqn : poisson/cvg/error_term} yields
\begin{equation}
	\label{eqn : poisson/cvg/UB_general_limit}
	\limsup_{L \to \infty} \sup\set{ \module{\frac{\MassGDF{Lx}{Ly}{L\ell} }{L} - \ell \LimMassG\p{\frac{ \norme{x-y} }{\ell}} } }%
					{\begin{array}{c}
						x,y\in \clball{0,1} \text{ colinear}\\ \norme{x-y} \le \ell \le 1	\end{array} }  %
		\le 2 \p{ \sqrt\delta   \LimMassP\p{0} } \vee \LimMassP\p{\frac{ 1}{ 1+ \sqrt\delta }},
\end{equation}
almost surely. Letting $\delta \to 0$ and using the continuity of $\LimMassP$ gives the desired almost sure convergence. Moreover the domination~\eqref{eqn : poisson/cvg/UB_general} gives the convergence in $\rL^1$.\qed


\subsection{Proof of Theorem~\ref{thm : intro/main/q}}
\label{subsec : poisson/q}

We claim that if $\nu$ is infinite, then $q\mapsto \LimMassA^{(q)}(\cdot)$ is constant. Indeed in this case $\pro^*$ is almost surely a dense subset of $\R^d$. Let $\eps, \ell>0$ and $0\le \beta \le 1$. Almost surely, for all animals $\xi \in \SetADFalt{0}{\ell \beta \base 1}{\ell}$, there exists an animal $\xi' \in \SetADFalt{0}{\ell \beta \base 1}{\ell(1 + \eps) }$ such that $\Mass{\xi'} = \Mass{\xi}$, and $\xi'\subseteq \pro^*$. In particular,
\begin{equation*}
	\MassADFpen{0}{\ell \beta \base1}{\ell}{0} \le \MassADFpen{0}{\ell \beta \base1}{\ell(1+\eps)}{\infty} \le \MassADFpen{0}{\ell \beta \base1}{\ell(1+\eps)}{0}
\end{equation*}
Dividing by $\ell$ and letting $\ell \to\infty$ gives, by Theorem~\ref{thm : intro/known_pen},
\begin{equation*}
	\LimMassA^{(0)}(\beta) \le (1+\eps)\LimMassA^{(\infty)}\p{\frac{\beta}{1+\eps}} \le  (1+\eps)\LimMassA^{(0)}\p{\frac{\beta}{1+\eps}}.
\end{equation*}
Since $\LimMassA^{(0)}$ and $\LimMassA^{(\infty)}$ are continuous on $\intervalleff01$, letting $\eps\to0$ gives $\LimMassA^{(0)}(\beta) = \LimMassA^{(\infty)}(\beta)$, thus the claim.

From now on we assume that $\nu$ is finite. In particular $\pro^*$ is a.s.\ a locally finite subset of $\R^d$.

\subsubsection{Proof of Theorem~\ref{thm : intro/main/q}, part 1: Continuity at $q=\infty$}
\label{subsubsec : poisson/cvg_pen/part1}
Let $q\in \intervalleoo0\infty$, $\ell>0$, $\beta \in \intervalleff01$, $\eps>0$. The map
\begin{align*}
	\SetADFalt{0}{\ell \beta \base 1}{\ell} &\longrightarrow \R\\
	\xi &\longmapsto \Mass{\xi} - q\#\p{\xi \cap (\pro^*)^\mathrm{c}}
\end{align*}
only takes values of the form $\sum_{i=1}^n \Mass{x_i}  - qk$, where $x_1,\dots,x_n$ are distinct elements of $\pro^* \cap\clball{0,\ell}$ and $k \in \N$. Consequently, it only takes a finite number of positive values, thus it admits a maximum. Choose in a measurable way an animal $\xi(\ell) \in \SetADFalt{0}{\ell \beta \base 1}{\ell}$ realizing this maximum. Thanks to Lemma~\ref{lem : poisson/cvg_pen/degree}, proven in Section~\ref{subsec : poisson/cvg_pen/technical}, we may assume that $\xi(\ell)$ has bounded degree.
\begin{Lemma}
	\label{lem : poisson/cvg_pen/degree}
	There exists an integer $\Cl{DEGREE}>0$, depending only on the dimension, such that for all finite subsets $A\subseteq \R^d$, there exists an animal with vertex set $A$, minimal length and degree at most $\Cr{DEGREE}$.
\end{Lemma}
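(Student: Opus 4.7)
The plan is to show that any \emph{minimum}-length animal with vertex set $A$ is a spanning tree (in the graph-theoretic sense) and, crucially, that every such Euclidean minimum spanning tree has bounded degree because edges incident to any common vertex must form angles of at least $\pi/3$. The constant $\Cr{DEGREE}$ can then be taken to be the kissing number of $\R^d$, which depends only on $d$.

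More precisely, since there are only finitely many (simple) graphs with vertex set $A$, there exists an animal $\xi^* = (A, E^*)$ of minimal length. First I would observe that $\xi^*$ must be a tree: any cycle would allow me to delete one of its edges while preserving connectivity and strictly decreasing the length. So $\xi^*$ is a spanning tree of $A$.

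The heart of the proof is the following \emph{angle lemma}: for any vertex $v\in A$, any two distinct neighbors $u_1,u_2$ of $v$ in $\xi^*$ satisfy $\widehat{u_1 v u_2} \ge \pi/3$. I would prove this by a standard swap argument. Suppose by contradiction that $\widehat{u_1 v u_2} < \pi/3$ and, without loss of generality, $\norme{v-u_1} \le \norme{v-u_2}$. By the law of cosines,
\begin{equation*}
	\norme{u_1 - u_2}^2
		= \norme{v-u_1}^2 + \norme{v-u_2}^2 - 2\norme{v-u_1}\norme{v-u_2}\cos \widehat{u_1 v u_2}
		< \norme{v-u_1}^2 + \norme{v-u_2}^2 - \norme{v-u_1}\norme{v-u_2} \le \norme{v-u_2}^2,
\end{equation*}
using $\norme{v-u_1}^2 \le \norme{v-u_1}\norme{v-u_2}$ in the last step. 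Now remove the edge $\acc{v, u_2}$ from $\xi^*$: this disconnects the tree into two components, one containing $v$ (hence $u_1$, since $\acc{v,u_1}$ is still present) and one containing $u_2$. Re-adding the edge $\acc{u_1, u_2}$ reconnects $A$ into a strictly shorter spanning tree, contradicting minimality of $\xi^*$.

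Finally, I would deduce the degree bound. Fix $v$ and consider the unit vectors $w_i \dpe (u_i - v)/\norme{u_i - v}$ pointing toward its neighbors $u_1,\dots,u_k$. The angle lemma forces $\ps{w_i}{w_j} \le 1/2$ for $i\ne j$, equivalently the open spherical caps of angular radius $\pi/6$ centered at the $w_i$ on the unit sphere $\S^{d-1}$ are pairwise disjoint. Since each such cap has the same positive spherical measure $c(d)$, we get $k \le \hd[d-1]{\S^{d-1}}/c(d) \edp \Cr{DEGREE}$, a constant depending only on $d$. This bound holds simultaneously at every vertex, so $\xi^*$ itself has degree at most $\Cr{DEGREE}$, which concludes.

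I do not expect any substantial obstacle: the swap argument is the only nontrivial step, and it is a standard fact about Euclidean minimum spanning trees. The only minor point to keep straight is that removing an edge from a tree produces exactly two components, and that $u_1$ really lies in the component of $v$ (guaranteed by the edge $\acc{v,u_1}$ surviving the deletion).
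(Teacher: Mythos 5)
Your proof is correct and follows essentially the same strategy as the paper: a local edge-swap showing that two edges meeting at a small angle can be replaced by a strictly shorter connection, plus a compactness/packing bound on the number of pairwise-separated directions on the sphere. The only differences are cosmetic — you use the law of cosines and disjoint spherical caps where the paper uses a triangle-inequality estimate and a covering-plus-pigeonhole count, and you restrict to spanning trees first, which the paper's swap does not need but which is harmless.
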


We denote by $x_1,\dots,x_n$ the elements of $\xi \cap (\pro^*)^\mathrm{c}$. Since $\Mass\xi \le \MassAUF\ell$,
\begin{equation}
	\label{eqn : poisson/cvg_pen/UB_bad_vertices}
	n \le \frac{\MassAUF\ell}{q}.
\end{equation}
Consider a Poisson process $\pro_\eps$ with intensity $\eps \Leb\otimes \nu$, independent from $\pro$. By the superposition principle (see e.g.\ Theorem~3.3 in \cite{LastPenrose}), $\pro + \pro_\eps$ is a Poisson process with intensity $\Leb\otimes (1+\eps)\nu$. Define
\begin{equation}
	\forall i\in\intint1n, \quad R_i \dpe \min_{y \in \pro_\eps^*} \norme{x_i - y}, \text{ and } R\dpe \sum_{i=1}^nR_i.
\end{equation}
For all $i\in \intint1n$,
\begin{equation*}
	\E{R_i} = \int_0^\infty\Pb{R_i > s}\d s = \int_0^\infty \exp\p{-\eps \nu\p{\intervalleoo0\infty} \Leb\p{\clball{0,1} } s^d}\d s \eqqcolon I(\eps) < \infty,
\end{equation*}
thus
\begin{equation*}
	\Econd{R}{\pro} = nI(\eps) \le \frac{\MassAUF\ell I(\eps)}{q}.
\end{equation*}
In particular, by~\eqref{eqn : intro/known/General_bound},
\begin{equation*}
	\E{R} \le \frac{\GeneralUB \ell I(\eps)}{q}.
\end{equation*}
Markov's inequality yields
\begin{equation}
	\label{eqn : poisson/cvg_pen/good_event}
	\Pb{R \le \frac{2\GeneralUB \ell I(\eps)}{q}} \ge \frac12.
\end{equation}

Consider the animal $\xi'(\ell)$ built from $\xi(\ell)$ by replacing each $x_i$ by a point $y_i \in \pro_\eps^*$ such that $\norme{x_i-y_i}=R_i$. By the triangle inequality and the bound on the degree of $\xi(\ell)$, we have
\begin{align}
	\norme{\xi'(\ell)}&\le \norme{\xi(\ell)} + \Cr{DEGREE}R.\nonumber%
	\intertext{In particular, on the event $\acc{R \le \frac{2\GeneralUB \ell I(\eps)}{q}}$,  }
	\label{eqn : poisson/cvg_pen/UB_length}
	\norme{\xi'(\ell)} &\le \ell\p{ 1 + \frac{2 \Cr{DEGREE} \GeneralUB I(\eps)}{q} }.
\end{align}
Inequalities~\eqref{eqn : poisson/cvg_pen/good_event} and~\eqref{eqn : poisson/cvg_pen/UB_length} imply
\begin{equation*}
	\Pb{ \MassADFpen{0}{\ell \beta \base 1}{\ell}{q}\cro\pro \le  \MassADFpen{0}{\ell \beta \base 1}{\p{ 1 + \frac{2 \Cr{DEGREE} \GeneralUB I(\eps)}{q} }\ell}{\infty}\cro{\pro + \pro_\eps} } \ge \frac12.
\end{equation*}
Applying~\eqref{eqn : intro/known/cvg}, we obtain
\begin{align}
	\LimMassA^{(q)}(\beta)\cro\nu%
		&\le \p{ 1 + \frac{2 \Cr{DEGREE} \GeneralUB I(\eps)}{q} }\LimMassA^{(\infty)}\p{\frac{\beta}{ 1 + \frac{2 \Cr{DEGREE} \GeneralUB I(\eps)}{q} }} \cro{(1+\eps)\nu}.\nonumber
	\intertext{By Lemma~\ref{lem : poisson/scaling}, }
	\LimMassA^{(q)}(\beta)\cro\nu%
		&\le (1+\eps)^{1/d}\p{ 1 + \frac{2 \Cr{DEGREE} \GeneralUB I(\eps)}{q} }\LimMassA^{(\infty)}\p{\frac{\beta}{ 1 + \frac{2 \Cr{DEGREE} \GeneralUB I(\eps)}{q} }} \cro{\nu}.\end{align}
	From now on we drop the notation $\cro\nu$. Since $\LimMassA^{(\infty)}$ is continuous, letting $q\to \infty$ then $\eps \to 0$ gives
	\begin{align}
	\declim{q\to\infty}\LimMassA^{(q)}(\beta)%
		&\le \LimMassA^{(\infty)}(\beta),\nonumber
	\intertext{thus}
	\declim{q\to\infty}\LimMassA^{(q)}(\beta)%
		&= \LimMassA^{(\infty)}(\beta).
\end{align}
The functions $\LimMassA^{(q)}$ and $\LimMassA^{(\infty)}$ are continuous on the compact set $\intervalleff01$ and the convergence is monotone, therefore by Dini's theorem, \begin{equation}
	\lim_{q \to \infty} \LimMassA^{(q)} = \LimMassA^{(\infty)}
\end{equation}
for the uniform convergence on $\intervalleff01$. \qed

\subsubsection{Proof of Theorem~\ref{thm : intro/main/q}, part 2: Continuity on $\intervallefo0\infty$}
With the notations of~\eqref{eqn : intro/main/decomposition_pro}, consider the process
\begin{equation}
	\hat\pro \dpe \sum_{n=1}^\infty \Dirac{(z_n, 1)}.
\end{equation}
Then by Theorem~\ref{thm : poisson/mapping}, $\hat\pro$ is a Poisson point process on $\R^d\times \intervallefo0\infty$ with intensity $\Leb \otimes \p{\nu\p{\intervalleoo0\infty} \Dirac{1} }$. In particular, by~\eqref{eqn : intro/known/General_bound},
\begin{equation}
	\LimMassA(0)\cro{ \nu\p{\intervalleoo0\infty}\Dirac1 } \le \Cr{Dirac_bound} \nu\p{\intervalleoo0\infty}^{1/d} < \infty.
\end{equation}
To prove that $q\mapsto \LimMassA^{(q)}$ is continuous on $\intervallefo0\infty$, we will show that it is $\LimMassA(0)\cro{ \nu\p{\intervalleoo0\infty}\Dirac1 }$-Lipschitz. Lemma~\ref{lem : poisson/cvg_pen/UB_bad_vertices}, proven in Section~\ref{subsec : poisson/cvg_pen/technical}, gives an upper bound for the number of useful vertices of an animal outside $\projpro$.
\begin{Lemma}
	\label{lem : poisson/cvg_pen/UB_bad_vertices}
	Almost surely, for all $u\in\clball{0,1}$, $\ell>0$ and $q\in \intervallefo0\infty$, there exists an animal $\xi \in \SetADFalt{0}{\ell u}{\ell}$ that maximizes $\Mass\xi - q\# \p{\xi \cap (\projpro)^\mathrm{c}}$ and satisfies
	\begin{equation}
		\label{eqn : poisson/cvg_pen/UB_bad_vertices}
		\# \p{\xi \cap (\projpro)^\mathrm{c}} \le \MassAUF\ell\cro{ \hat\pro }.
	\end{equation}
\end{Lemma}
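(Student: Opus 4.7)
My plan is to reduce the result to a handshake-lemma bound by temporarily incorporating $0$ and $\ell u$ as honest vertices of the animal, thereby neutralising the awkward ``distance-to-boundary'' contribution in the admissibility constraint. I work on the almost sure event that $\projpro$ is locally finite. For expository simplicity I describe the argument when $0,\ell u\notin V(\xi_0)$; the case where one of these is already in $V(\xi_0)$ requires only minor modifications (essentially, stripping off only the endpoint that was \emph{added} in the augmentation below).

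First, I would fix a maximizer $\xi_0\in\SetADFalt{0}{\ell u}{\ell}$ of $\Mass{\xi}-q\#(\xi\cap\projpro^{\mathrm c})$; one exists by a compactness argument, since for $q>0$ the objective is bounded above by $\MassAUF\ell$ and any near-maximizer has a bounded number of bad vertices, while for $q=0$ the empty animal is always admissible. Replacing $\xi_0$ by a spanning tree leaves the objective unchanged, so I may assume $\xi_0$ is a tree. I then form the augmented tree
\begin{equation*}
\tilde\xi\dpe \xi_0\cup\{0,\ell u\}
\end{equation*}
by adjoining the two pendant edges $\{0,p_0\}$ and $\{\ell u,p_1\}$ with $p_0,p_1\in V(\xi_0)$ realising $\d(0,\xi_0)$ and $\d(\ell u,\xi_0)$ (and taking $\tilde\xi=\{0,\ell u\}$ with a single edge if $\xi_0=\emptyset$). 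Then $\tilde\xi$ is a tree of length $\norme{\xi_0}+\d(0,\xi_0)+\d(\ell u,\xi_0)\le\ell$ containing $0$ and $\ell u$ as leaves, so $\tilde\xi\in\SetAUF{\ell}$.

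The heart of the argument is to iterate on $\tilde\xi$ the two local moves: \emph{(a)} delete any bad leaf $v\notin\{0,\ell u\}$ together with its incident edge; \emph{(b)} bypass any bad degree-$2$ vertex $v\notin\{0,\ell u\}$, removing it and replacing its two edges by a single Euclidean edge between its neighbours. Each move strictly decreases the vertex count, preserves or decreases the length by the triangle inequality, keeps the graph a tree in $\SetAUF\ell$, removes exactly one bad vertex and no good one, and (under $0,\ell u\notin V(\xi_0)$) never touches the edges incident to $0$ or $\ell u$, so $\deg(0)=\deg(\ell u)=1$ throughout. The procedure terminates at a tree $\tilde\xi^*\in\SetAUF\ell$ in which $0$ and $\ell u$ are still leaves, every leaf is either good or in $\{0,\ell u\}$, and every bad vertex outside $\{0,\ell u\}$ has degree at least $3$. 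Writing $b$ for the number of bad vertices of $\tilde\xi^*$ outside $\{0,\ell u\}$, $g$ for the number of its good vertices and $g_L\le g$ for the number of those that are leaves of $\tilde\xi^*$, the handshake identity $\sum_v\deg(v)=2(|V(\tilde\xi^*)|-1)$ against the lower bound $2+g_L+2(g-g_L)+3b$ yields $b\le g_L\le g$.

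It then remains to set $\xi^*\dpe\tilde\xi^*\setminus\{0,\ell u\}$ and check that it does the job: it is still a tree (two leaves removed), hence connected; it satisfies $\d(0,\xi^*)+\d(\ell u,\xi^*)+\norme{\xi^*}\le\norme{\tilde\xi^*}\le\ell$, so $\xi^*\in\SetADFalt{0}{\ell u}{\ell}$; the reduction bookkeeping together with the almost-sure fact $0,\ell u\notin\projpro$ shows $\xi^*$ is still a maximizer; and finally
\begin{equation*}
\#(V(\xi^*)\cap\projpro^{\mathrm c}) = b \le g = \Mass{\tilde\xi^*}\cro{\hat\pro} \le \MassAUF\ell\cro{\hat\pro},
\end{equation*}
since $\tilde\xi^*\in\SetAUF\ell$. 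The main subtlety avoided by the detour through $\tilde\xi$ is that moves (a)-(b) applied directly to $\xi_0$ can increase $\d(0,\xi_0)+\d(\ell u,\xi_0)$ by more than the length they save, breaking admissibility; inside $\tilde\xi$ these two distances are identically zero and the moves cost nothing.
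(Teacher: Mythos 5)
Your proof is correct and follows essentially the same strategy as the paper's: reduce to a tree, use local improvement moves to force every bad vertex to have degree at least $3$, and conclude by the handshake identity that the bad vertices are outnumbered by the good ones; the only difference is that you adjoin $0$ and $\ell u$ as protected pendant vertices to neutralise the distance terms, whereas the paper keeps the animal in $\SetADFalt{0}{\ell u}{\ell}$ and simply declares the two vertices closest to $0$ and $\ell u$ honorary ``good'' vertices, which achieves the same bookkeeping. One small caveat: your appeal to the ``almost-sure fact $0,\ell u\notin\projpro$'' is not legitimate as stated, since the lemma quantifies over all $u$ and $\ell$ inside a single almost-sure event (so $\ell u$ may well lie in $\projpro$); fortunately it is also unnecessary, because whatever mass the augmentation adds at $0$ and $\ell u$ is exactly removed when you strip these vertices off at the end, while the moves in between delete only zero-mass vertices, so $\xi^*$ is a maximizer regardless.
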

Let $0\le q_1 \le q_2 < \infty$. The inequality 
\begin{equation}
	\label{eqn : poisson/cvg_pen/easy_bound} 
	\LimMassA^{(q_1)}\ge \LimMassA^{(q_2)}
\end{equation} 
is straightforward. Let $\beta \in \intervalleff01$ and $\ell>0$. Consider an animal $\xi$ as in Lemma~\ref{lem : poisson/cvg_pen/UB_bad_vertices}, with $q=q_1$. Then
\begin{align}
	\Mass\xi - q_2\# \p{\xi \cap (\projpro)^\mathrm{c}} %
		&= \MassADFpen{0}{\ell \beta \base 1}{\ell}{q_1} - (q_2 - q_1)\# \p{\xi \cap (\projpro)^\mathrm{c}}\nonumber\\
		&\ge \MassADFpen{0}{\ell \beta \base 1}{\ell}{q_1} - (q_2 - q_1)\MassAUF\ell\cro{ \hat\pro }.\nonumber
	\intertext{In particular,}
	\MassADFpen{0}{\ell \beta \base 1}{\ell}{q_2} %
		&\ge \MassADFpen{0}{\ell \beta \base 1}{\ell}{q_1} - (q_2 - q_1)\MassAUF\ell\cro{ \hat\pro }.
\end{align}
By~\eqref{eqn : intro/known/cvg}, dividing by $\ell$ and letting $\ell \to \infty$ gives
\begin{equation}
	\label{eqn : poisson/cvg_pen/other_bound} 
	\LimMassA^{(q_2)}(\beta \base 1)\ge \LimMassA^{(q_1)}(\beta \base 1) - (q_2-q_1)\LimMassA(0)\cro{ \nu\p{\intervalleoo0\infty}\Dirac1 }.
\end{equation}
Consequently, $q\mapsto \LimMassA^{(q)}$ is $\LimMassA(0)\cro{ \nu\p{\intervalleoo0\infty}\Dirac1 }$-Lipschitz thus continuous on $\intervallefo0\infty$. \qed


\subsection{Proof of the technical lemmas}
\label{subsec : poisson/cvg_pen/technical}
\begin{proof}[Proof of Lemma~\ref{lem : poisson/cvg_pen/degree}]
	Since the unit sphere $\S$ of $\R^d$ is compact, it may be covered by a finite number $\Cr{DEGREE}$ of open balls with radius $\frac12$. By pigeonhole principle, for all $y_1, \dots, y_{\Cr{DEGREE}+1} \in \S$, there exists distinct $i,j\in \intint{1}{\Cr{DEGREE} +1 }$ such that
	\begin{equation}
		\label{eqn : poisson/cvg_pen/degree/compactness_sphere}
		\norme{y_i - y_j} < 1.
	\end{equation}
	Let $A$ be a finite subset of $\R^d$ and $\xi$ an animal with vertex set $A$. It is sufficient to show that if $\xi$ has a vertex with degree at least $\Cr{DEGREE}+1$, then there exists another animal $\xi'$ with vertex set $A$ and smaller length. Assume that such a vertex $x$ exists. By~\eqref{eqn : poisson/cvg_pen/degree/compactness_sphere} there exist distinct neighbours $x_1 ,x_2$ of $x$ in $\xi$, such that
	\begin{equation}
		\label{eqn : poisson/cvg_pen/degree/x1_x2}
		\norme{\frac{x_1 - x}{\norme{x_1 - x} } - \frac{x_2 - x}{\norme{x_2 - x} } } < 1.
	\end{equation}
	Up to exchanging $x_1$ and $x_2$, we may assume $\norme{x_1 - x} \le \norme{x_2 -x}$. We have by triangle inequality
	\begin{align*}
		\norme{x_1 - x_2}%
			&\le \norme{x_1 - \p{x + \frac{\norme{x_1 - x}(x_2 - x)}{\norme{x_2 - x}} } } + \norme{ \p{x + \frac{\norme{x_1 - x}(x_2-x)}{\norme{x_2 - x}} } - x_2}\\
			&\le \norme{x_1 - x} \cdot \norme{\frac{x_1 - x}{\norme{x_1 - x} } - \frac{x_2 - x}{\norme{x_2 - x} } } + \norme{x_2 - x} \cdot \module{1- \frac{ \norme{ x_1-x} }{ \norme{ x_2-x} }}.
		\intertext{Applying~\eqref{eqn : poisson/cvg_pen/degree/x1_x2} and $\norme{x_1 - x} \le \norme{x_2 -x}$ yields }
		\norme{x_1 - x_2}%
			&< \norme{x_1 - x} + \norme{x_2 - x} \p{1- \frac{ \norme{ x_1-x} }{ \norme{ x_2-x} }} = \norme{x_2 - x}.
	\end{align*}
	In particular, removing the edge $\acc{x,x_2}$ and adding the edge $\acc{x_1,x_2}$ in $\xi$ strictly decreases the length while preserving connectedness and vertex set. This concludes the proof.
\end{proof}
\begin{proof}[Proof of Lemma~\ref{lem : poisson/cvg_pen/UB_bad_vertices}]
	Let $u\in\clball{0,1}$, $\ell>0$ and $q\in \intervallefo0\infty$. Consider an animal $\xi = (V,E) \in \SetADFalt{0}{\ell u}{\ell}$ that maximizes $\Mass\xi - q\# \p{\xi \cap (\projpro)^\mathrm{c}}$, the existence of which has been shown at the beginning of Section~\ref{subsubsec : poisson/cvg_pen/part1}. Up to modifying the edge set $E$, we can assume that $\xi$ is a tree and $V$ has minimal cardinal. It suffices to show that $\xi$ satisfies~\eqref{eqn : poisson/cvg_pen/UB_bad_vertices}. Consider
	\begin{equation*}
		x\in \argmin_{z \in V} \norme{z} \text{ and } y \in \argmin_{z\in V} \norme{\ell u-z}.
	\end{equation*}
	We say that a vertex $z\in V$ is \emph{good} if $z\in\acc{x,y}\cup \projpro$, and \emph{bad} otherwise. Let $\Good$ and $\Bad$ denote the set of good vertices and bad vertices respectively. Note that removing any bad vertex with degree $1$ creates an animal $\xi'\in \SetADFalt{0}{\ell u}{\ell}$ with greater penalized mass. So does removing any bad vertex with degree $2$ and connecting its two neighbours together. Consequently, by definition of $\xi$, all bad vertices have degree at least $3$, thus
	\begin{align}
		\sum_{z \in V}\deg_\xi(z) %
			&= \sum_{z \in \Good}\deg_\xi(z) + \sum_{z \in \Bad}\deg_\xi(z)\nonumber \\
			&\ge \# \Good + 3\# \Bad. \label{eqn : poisson/cvg_pen/UB_bad_vertices/LB_deg}
	\end{align}
	Besides, since $\xi$ is a tree,
	\begin{equation}
		\label{eqn : poisson/cvg_pen/UB_bad_vertices/deg_nb_edges}
		\sum_{z \in V}\deg_\xi(z) = 2\# E = 2(\# V -1) = 2(\# \Good + \# \Bad -1).
	\end{equation}
	Combining~\eqref{eqn : poisson/cvg_pen/UB_bad_vertices/LB_deg} and~\eqref{eqn : poisson/cvg_pen/UB_bad_vertices/deg_nb_edges}, we get
	\begin{equation*}
		\#\Bad \le \# \Good -2.
	\end{equation*}
	Furthermore, $\# \Good \le 2 + \Mass{\xi}[\hat \pro]$, thus \eqref{eqn : poisson/cvg_pen/UB_bad_vertices}.
\end{proof}

\newpage

\end{document}